\def\R{\mathbb{R}}
\def\N{\mathbb{N}}
\def\Z{\mathbb{Z}}
\def\Co{\mathbb{C}}
\def\H{\mathbb{H}}
\def\supp{{\rm supp}}
\newcommandx{\emanuel}[2][1=]{\todo[linecolor=green,backgroundcolor=green!25,bordercolor=black,#1]{#2}}
\newcommandx{\diogo}[2][1=]{\todo[linecolor=orange,backgroundcolor=orange!25,bordercolor=orange,#1]{#2}}
\newcommandx{\mateus}[2][1=]{\todo[linecolor=blue,backgroundcolor=blue!25,bordercolor=blue,#1]{#2}}
\newcommandx{\danger}[2][1=]{\todo[linecolor=red,backgroundcolor=red!25,bordercolor=blue,#1]{#2}}
\renewcommand{\d}{\text{\rm d}}
\newcommand{\one}{\mathbbm 1}
\newcommand{\pvector}[1]{% column vector
  \begin{pmatrix}
    #1
  \end{pmatrix}} %
\newcommand{\ddirac}[1]{% Dirac's delta
  \,\boldsymbol{\delta}\!\pvector{#1}\!} %
\newcommand{\jp}[1]{\langle{#1}\rangle}
\newtheorem{theorem}{Theorem}
\newtheorem{proposition}[theorem]{Proposition}
\newtheorem{lemma}[theorem]{Lemma}
\numberwithin{equation}{section}
\title[Extremizers for adjoint Fourier restriction on hyperboloids]{Extremizers for adjoint Fourier restriction on hyperboloids: the higher dimensional case}
\author[Carneiro]{Emanuel Carneiro}
\author[Oliveira e Silva]{Diogo Oliveira e Silva}
\author[Sousa]{Mateus Sousa}
\author[Stovall]{Betsy Stovall}
\address{
IMPA - Instituto de Matem\'{a}tica Pura e Aplicada\\
Rio de Janeiro - RJ, Brazil, 22460-320.}
\email{carneiro@impa.br}
\address{
        University of Birmingham\\
        Edgbaston, Birmingham, B15 2TT, England.}
\email{d.oliveiraesilva@bham.ac.uk}
\address{Ludwig-Maximilans Universität München \\
Theresienstr. 39, 80333 München, Germany.}
\email{sousa@math.lmu.de}
\address{University of Wisconsin--Madison, 480 Lincoln Drive, Madison, Wisconsin, USA, 53706.}
\email{stovall@math.wisc.edu}
\begin{document}

\subjclass[2010]{42B10}
\keywords{Sharp Fourier restriction theory, extremizers, Klein--Gordon equation, hyperboloid.}
\begin{abstract} 
We prove that in dimensions $d \geq 3$, the non-endpoint, Lorentz-invariant $L^2 \to L^p$ adjoint Fourier restriction inequality on the $d$-dimensional hyperboloid $\H^d \subseteq \R^{d+1}$ possesses maximizers.  The analogous result had been previously established in dimensions $d=1,2$ using the convolution structure of the inequality at the lower endpoint (an even integer); we obtain the generalization by using tools from bilinear restriction theory.  
\end{abstract}

\maketitle
%\tableofcontents

\section{Introduction} 

\subsection{Setup} In this note we continue the study initiated in \cite{COSS17, Qu15} on sharp Fourier restriction theory on hyperboloids. 
Let us start by recalling the basic terminology and the main definitions.

\smallskip

Throughout this work we adopt the following normalization for the Fourier transform in $\R^{d+1}$:
\begin{equation}\label{NormalizeFT}
\widehat{g}(\zeta)=\int_{\R^{d+1}} e^{-iz\cdot\zeta}\, g(z)\,\d z.
\end{equation}
If $\xi \in \R^d$, we define $\langle \xi \rangle := (1 + |\xi|^2)^{\frac12}$. The hyperboloid $\H^d \subset \R^{d+1}$ is the surface defined by\footnote{A simple rescaling argument transfers all the results of this paper to the hyperboloids $\H^d_s=\big\{(\xi,\tau)\in\R^d\times\R: \tau=(s^2+|\xi|^2)^{\frac12}\big\}$, $s > 0$.}
\begin{equation*}
\H^d=\big\{(\xi,\tau)\in\R^d\times\R: \tau=\langle \xi \rangle\big\},
\end{equation*}
and comes equipped with the Lorentz-invariant measure
\begin{equation}\label{defsigma}
\d\sigma(\xi,\tau)=\ddirac{\tau-\jp{\xi}}\frac{\d \xi\,\d \tau}{\langle \xi \rangle},
\end{equation}
which is defined by duality on an appropriate dense class of functions via the identity
$$\int_{\H^d}\varphi(\xi,\tau)\,\d\sigma(\xi,\tau)=\int_{\R^d} \varphi(\xi,\langle \xi \rangle)\frac{\d \xi}{\langle \xi \rangle}.$$
The {\it Fourier extension operator} on $\H^d$ (or adjoint Fourier restriction operator) is given by
\begin{equation}\label{eq:DefExtensionOp}
T(f)(x,t):=\int_{\R^d} e^{ix\cdot \xi} e^{it\langle \xi \rangle} f(\xi)\frac{\d \xi}{\langle \xi \rangle},
\end{equation}
where $(x,t)\in\R^d\times\R$ and $f$ belongs to the Schwartz class in $\R^d$.
Throughout this note we identify a function $f:\H^d\to\Co$ with a complex-valued function defined on $\R^d$. The norm in $L^p(\H^d) = L^p(\H^d, \sigma)$ is then given by
$$\|f\|_{L^p(\H^d)}=\left(\int_{\R^d} |f(\xi)|^p \frac{\d \xi}{\langle \xi \rangle}\right)^{\frac1p}.$$
With the Fourier transform normalized as in \eqref{NormalizeFT}, note that
\begin{equation}\label{TintermsofHat}
T(f)(x,t)=\widehat{f\sigma}(-x,-t).
\end{equation}
The seminal work of Strichartz \cite[Theorem 1, Cases III (b)(c)]{St77} establishes the estimate
\begin{equation}\label{ExtensionInequality}
\|T(f)\|_{L^p(\R^{d+1})}\leq {\bf H}_{d,p} \, \|f\|_{L^2(\H^d)}\,,
\end{equation}
with a finite constant ${\bf H}_{d,p}$ (independent of $f$), provided that
  \begin{equation}\label{AdmissibleRange}
   \begin{cases}
    6 \leq p< \infty, \text{ if } d=1;\\
  \frac{2(d+2)}{d} \leq p\leq  \frac{2(d+1)}{d-1}, \text{ if } d\geq 2.
      \end{cases} 
\end{equation}
We reserve the symbol ${\bf H}_{d,p}$ for the optimal constant
\begin{equation}\label{Def_Sharp_Const}
{\bf H}_{d,p}:=\sup_{0\neq f\in L^2(\H^d)} \frac{\|T(f)\|_{L^p(\R^{d+1})}}{\|f\|_{L^2(\H^d)}},
\end{equation}
and say that a nonzero function $f\in L^2(\H^d)$ is an {\it extremizer} of \eqref{ExtensionInequality} if it realizes the supremum in \eqref{Def_Sharp_Const}, and we call a nonzero sequence  $\{f_n\} \subset L^2(\H^d)$ an {\it extremizing sequence} of \eqref{ExtensionInequality} if the ratio ${\|T(f_n)\|_{L^p(\R^{d+1})}}/{\|f_n\|_{L^2(\H^d)}}$ converges to ${\bf H}_{d,p}$ {as $n\to\infty$}.

\subsection{Main theorem} The first result to address the sharp form of \eqref{ExtensionInequality} is due to Quilodr\'{a}n \cite{Qu15}, in which he computes the exact values of ${\bf H}_{d,p}$ in the endpoint cases $(d,p) = (2,4),(2,6)$ and $(3,4)$, and establishes the non-existence of extremizers in these cases.\footnote{By contrast, the recent work \cite{Qu18} establishes existence of extremizers for the endpoint $L^2$ to $L^4$ adjoint Fourier restriction inequality on the one-sheeted hyperboloid in dimension 4.} A crucial element of his proof is the fact that the Lebesgue exponents $p$ under consideration are even integers, a fact that allows one to use the convolution structure of the problem via an application of Plancherel's theorem. In \cite{Qu15}, Quilodr\'{a}n also raises two interesting questions: What is the value of the sharp constant at the endpoint $(d,p) = (1,6)$ (the remaining case with $p$ even); and do extremizers exist in the non-endpoint cases.

\smallskip

The precursor \cite{COSS17} of the present work contains two main results. The first result \cite[Theorem 1]{COSS17} is the explicit computation of the optimal constant ${\bf H}_{d,p}$ in the case $(d,p) = (1,6)$ and the proof that extremizers do not exist in this case. The second result \cite[Theorem 2]{COSS17} establishes the existence of extremizers in all non-endpoint cases of \eqref{ExtensionInequality} in dimensions $d \in \{1,2\}$. The proof of the latter result is obtained  by establishing {that} extremizing sequences converge modulo certain symmetries of the problem. {In the present case}, by a {\it symmetry} we mean an operator $S:L^2(\H^d)\rightarrow L^2(\H^d)$ such that 
$$\|Sf\|_{L^2(\H^{d})}=\|f\|_{L^2(\H^{d})}
\text{ and }
\|T(Sf)\|_{L^p(\R^{d+1})}=\|T(f)\|_{L^p(\R^{d+1})}.$$\noindent {Such an operator} can shift the mass of sequences and destroy strong convergence while still mantaining its extremizing properties, hence the study of these symmetries is fundamental. In the {case of the hyperboloid}, one has to account for the action of the Lorentz group and space-time modulations (and their compositions), which we introduce in more detail in the next section. In \cite{COSS17}, the convergence is obtained via a direct and self-contained approach that explores the convolution structure of the problem at the lower endpoint (which is an even integer in these low dimensions). The drawback of this particularly simple proof is that it does not work in the higher dimensional cases $d \geq 3$.

\smallskip

In this note we return to this problem and extend the result of \cite[Theorem 2]{COSS17} to dimensions $d\geq 3$. Our main result is the following.

\begin{theorem}\label{Thm1}
Let $d\geq 3$.
Extremizers for inequality \eqref{ExtensionInequality} exist if $\frac{2(d+2)}{d} < p < \frac{2(d+1)}{d-1}$. In fact, given any extremizing sequence $\{f_n\}$, there {exist} symmetries $S_n$ such that $\{S_nf_n\}$ converges in $L^2(\H^d)$ to an extremizer $f$, {after passing to a subsequence}.
\end{theorem}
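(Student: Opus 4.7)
My plan is to establish Theorem \ref{Thm1} via a concentration-compactness argument modulo the symmetries of the problem, in the spirit of B\'egout--Vargas and Keraani for the Schr\"odinger equation. Given an extremizing sequence $\{f_n\}$ with $\|f_n\|_{L^2(\H^d)} = 1$ and $\|T(f_n)\|_{L^p(\R^{d+1})} \to {\bf H}_{d,p}$, the goal is to produce symmetries $S_n$ (compositions of Lorentz transformations, space-time translations, and modulations) such that $S_n f_n$ converges strongly in $L^2(\H^d)$, along a subsequence, to an extremizer. Since the convolution trick of \cite{COSS17} is unavailable in dimensions $d \geq 3$, we instead construct a profile decomposition built on top of bilinear restriction estimates.

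The key ingredient is an \emph{inverse Strichartz inequality}: if $\{g_n\}$ is bounded in $L^2(\H^d)$ with $\limsup_n \|T(g_n)\|_{L^p(\R^{d+1})} \geq \alpha > 0$, then after passing to a subsequence there exist symmetries $S_n$ and a nonzero $\phi \in L^2(\H^d)$ with $S_n g_n \rightharpoonup \phi$ weakly in $L^2$, and with $\|\phi\|_{L^2(\H^d)}$ bounded below by a positive power of $\alpha$. The proof combines a Whitney decomposition of pairs of caps on $\H^d$ with a Tao-type bilinear extension estimate for transverse, comparably-sized caps of the hyperboloid, which is available strictly inside the range allowed by our assumption $p < \frac{2(d+1)}{d-1}$. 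A dyadic pigeonholing argument locates a cap carrying a non-negligible fraction of the mass; the Lorentz group is then used to renormalize this cap to a fixed bounded reference cap around the tip of $\H^d$, while a space-time translation and modulation account for spatial concentration of $T(g_n)$.

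Iterating the inverse Strichartz inequality yields a profile decomposition
\[
f_n = \sum_{j=1}^J S_n^j \phi^j + r_n^J,
\]
with asymptotically orthogonal parameter sequences $\{S_n^j\}$ in the natural metric on the symmetry group, with the Pythagorean identity $\|f_n\|_{L^2}^2 = \sum_{j \leq J} \|\phi^j\|_{L^2}^2 + \|r_n^J\|_{L^2}^2 + o_n(1)$, and with $\limsup_n \|T(r_n^J)\|_{L^p} \to 0$ as $J \to \infty$. Asymptotic orthogonality of parameters decouples the $L^p$ norm, yielding $\|T(f_n)\|_{L^p}^p = \sum_{j \leq J} \|T(\phi^j)\|_{L^p}^p + o_n(1)$. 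Combined with the extremizing hypothesis, this produces
\[
{\bf H}_{d,p}^p = \sum_j \|T(\phi^j)\|_{L^p}^p \leq {\bf H}_{d,p}^p \sum_j \|\phi^j\|_{L^2}^p \leq {\bf H}_{d,p}^p \Bigl(\sum_j \|\phi^j\|_{L^2}^2\Bigr)^{p/2} \leq {\bf H}_{d,p}^p,
\]
where the second inequality uses $p > 2$ and $\sum_j \|\phi^j\|_{L^2}^2 \leq 1$. The resulting chain of equalities forces a single nonzero profile of unit $L^2$-norm with $r_n \to 0$ in $L^2$, yielding strong convergence of $(S_n^1)^{-1} f_n$ to the extremizer $\phi^1$.

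The main obstacle will be proving the inverse Strichartz inequality and, more generally, identifying the symmetries that renormalize every source of non-compactness. The symmetry group is non-compact, and at the two ends of the admissible range the hyperboloid behaves qualitatively differently: it is paraboloid-like at low frequencies and cone-like at high frequencies, with the lower and upper non-endpoint restrictions on $p$ corresponding precisely to the Strichartz endpoints of the paraboloid and the cone, respectively. The strict inequalities in the non-endpoint assumption furnish the slack needed both for the bilinear estimate to hold with room to spare and for mass escaping to arbitrarily high frequencies to be renormalizable by Lorentz boosts without the limiting profile being destroyed. Verifying the $L^p$ decoupling under composite symmetries, and ruling out cross-interaction between profiles concentrated at incompatible scales or at widely separated boost parameters, is the most delicate technical step.
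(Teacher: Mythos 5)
Your overall architecture (inverse Strichartz inequality $\Rightarrow$ profile decomposition $\Rightarrow$ strict convexity of $t\mapsto t^{p/2}$ forcing a single unit-mass profile) would prove the theorem, but the one step you do not outsource to standard machinery --- the inverse Strichartz inequality --- has a genuine gap as described. A Whitney decomposition into pairs of transverse, \emph{comparably-sized caps} fed into a Tao-type bilinear estimate only works where the renormalized pieces of $\H^d$ are uniformly elliptic. At frequencies $|\xi|\simeq N\gg1$ the hyperboloid has one principal curvature of size $\simeq N^{-3}$ (radial) against $\simeq N^{-1}$ (angular); consequently, pairs of regions in a single dyadic annulus whose angular separation exceeds $\simeq N^{-1}$ --- which any Whitney decomposition of the off-diagonal set necessarily produces --- are transverse only in the degenerate, cone-like directions, and Tao's elliptic theorem does not apply to them with uniform constants after rescaling. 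This conic regime is exactly why the $d\leq2$ argument does not extend and is the main point of the present paper: one needs a hybrid dyadic family of $r$-caps ($r\leq1$, handled by Lorentz boost + parabolic rescaling + Tao) \emph{and} $r$-sectors ($r>1$, handled by Wolff's cone bilinear estimate or Candy's general-phase version), preceded by a Littlewood--Paley annular decoupling so that all regions live in one annulus with uniform constants. A related imprecision: the bilinear estimates themselves hold up to and including $p=\frac{2(d+1)}{d-1}$; the strict upper inequality is needed later, to make the power of $r$ attached to sectors strictly negative so that the distinguished region cannot be an arbitrarily eccentric sector (otherwise no Lorentz boost maps it into a fixed ball), while the strict lower inequality is what the concentration-compactness step requires.

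Your endgame is also genuinely different from ours, and heavier than necessary. We never iterate to a full profile decomposition: once a single distinguished region carrying a universal proportion of the $L^2$ mass is located and boosted into a fixed ball, the machinery of \cite[Section 6]{COSS17} yields, after a space-time modulation, a nonzero weak limit together with almost-everywhere convergence of the extensions, and \cite[Proposition 1.1]{FVV11} --- a Br\'ezis--Lieb-type argument exploiting $p>2$ --- upgrades this directly to strong convergence of the symmetrized sequence to an extremizer. This bypasses the asymptotic orthogonality of symmetry parameters and the $L^p$ decoupling of profiles, which are precisely the steps you flag as most delicate; your final chain of inequalities is the same strict-convexity argument that \cite{FVV11} packages once and for all. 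If you repair the conic regime as above, either endgame completes the proof, but the single-profile shortcut saves you the entire last paragraph of your program.
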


The main new ingredient of the proof, when compared to that of \cite[Theorem 2]{COSS17}, is the use of machinery from bilinear restriction theory to obtain a refined version of inequality \eqref{ExtensionInequality}. 
As in \cite{Ca17, COSS17}, we exploit the fact that the hyperboloid is well approximated by the paraboloid and the cone. 
The geometric construction underlying the bilinear restriction machinery accounts for this fact:  in some sense, it interpolates between the two endpoint cases, which we will refer to as the {\it elliptic} and the {\it conic} regimes, respectively.

\smallskip

Estimates for Fourier extension operators are related to estimates for dispersive partial differential equations. In our case, the extension operator $T$ defined in \eqref{eq:DefExtensionOp} is related to the Klein--Gordon equation $\partial_t^2u = \Delta_x u -u$ for $(x,t)\in\R^d\times\R$. Defining the (half) {\it Klein--Gordon propagator} as
\begin{equation*}
    e^{it\sqrt{1-\Delta}}g(x):= \frac{1}{(2\pi)^d}\int_{\R^d} e^{i x\cdot\xi}\,e^{it\langle\xi\rangle}\,\widehat{g}(\xi)\,\d\xi,
\end{equation*}
one readily sees that 
\begin{equation}\label{July19_12:43am}
T(f)(x,t)=(2\pi)^{d} \,e^{it\sqrt{1-\Delta}}\, g(x),
\end{equation} 
with $\widehat{g}(\xi)=\langle \xi \rangle^{-1}f(\xi)$.
Therefore, inequality \eqref{ExtensionInequality} can be restated as 
\begin{equation*}
   \|e^{it\sqrt{1-\Delta}}g\|_{ L_{x,t}^p(\R^d\times\R)}\leq (2\pi)^{-d} \,{\bf H}_{d,p}\,\|g\|_{H^{\frac{1}{2}}(\R^d)},
\end{equation*}
where for $s \geq 0$ we denote by $H^s(\R^d)$ the nonhomogeneous Sobolev space, defined as
$$H^{s}(\R^d)=\Big\{g\in L^2(\R^d):~\|g\|_{H^s(\R^d)}^2:=\int_{\R^d}|\widehat{g}(\xi)|^2\jp{\xi}^{2s}\d\xi<\infty\Big\}.$$
The reader should keep in mind this equivalent formulation, since some of the results we quote from \cite[Section 6]{COSS17} are stated in terms of the Klein--Gordon propagator. 

\smallskip

Extremal problems related to Fourier restriction theory have garnished a lot of attention in recent years, and a large body of {work} has emerged. Several authors have investigated the interface between bilinear restriction theory and these extremal questions, {both from the restriction side and the partial differential equations point of view}. Here we mention the works \cite{BV07,Ca17,FLS16,FS17,JSS,KSV12,Ra12}, all of which deal with these connections. Many other authors have contributed to the development of the area, and we refer the reader to \cite{COSS17} for an exposition of related literature on sharp Fourier restriction theory.

\subsection{Outline} We discuss the Lorentz symmetry of the problem in Section \ref{sec:Prelims}, where we also establish an annular decoupling inequality which implies a modest  gain of control over extremizing sequences.
The actual proof starts in Section \ref{sec:ang_rest}, with a simple but useful argument that allows us to restrict the angular support of the functions under consideration.
In Section \ref{sec:Caps}, we describe a geometric decomposition of space into {\it caps} and {\it sectors}, and the corresponding bilinear restriction estimates that will play a key role in the analysis. 
As in \cite{COSS17}, the crux of the matter is the construction of a {\it distinguished region}, i.e. the lift of a cap or a sector to the hyperboloid that contains a positive universal proportion of the total mass in an extremizing sequence. 
We establish this fact via a refined Strichartz inequality, formulated as Theorem \ref{Sharpened} and proved in Section \ref{sec:RefinedStrichartz}.
Once the existence of a special region has been established in dimensions $d \geq3$, the proof of Theorem \ref{Thm1} is finished by invoking the concentration-compactness material of \cite[Section 6]{COSS17}, which was already tailor-made to receive the input in any dimension. The details are outlined in Section \ref{sec:CC}.

\subsection{Notation} {\it Universal} quantities will be allowed to depend only on the dimension $d$ and the Lebesgue exponent $p$.
In a similar spirit, given $A,B\geq 0$, we write $A\simeq B$ (resp. $A \lesssim B$) and say that $A,B$ are {\it comparable} if there exists a finite constant $C=C(d,p)>0$, such that $\frac1C B\leq A\leq CB$ (resp. $A \leq CB$).
A number $N$ is said to be {\it dyadic} if it is an integral power of 2, i.e. $N\in2^{\Z}$.

%%%%%%%%%%%%%%%%%%%%%%%%%%%%%%%%%%%%%%%%%%%%%%%%%%%%%%%%%%%%%%%%%%%%%%%%%%%%%%%%%%%%%%%%%%%%%%%%%%%%%%%%%%%%%%%%%%%%%%%%%%%%%%%%%%%%%%%%%%%%%%%%

\section{Preliminaries}\label{sec:Prelims}
\subsection{Lorentz boosts}
The Lorentz group, denoted $\mathcal{L}$, is defined as the group of invertible linear transformations in $\R^{d+1}$ that preserve the bilinear form 
$(x,y)\in\R^{d+1}\times\R^{d+1}\mapsto x\cdot Jy,$ where $J=\text{diag}(-1,\ldots,-1,1)$.
In particular, if $L\in\mathcal{L}$, then $|\det L|=1$.
Denote the subgroup of $\mathcal{L}$ that preserves $\mathbb{H}^d$ by $\mathcal{L}^+$.
A one-parameter subgroup of $\mathcal{L}^+$ is $\{L^t\}_{t\in(-1,1)}$, where the linear map $L^t:\R^{d+1}\to\R^{d+1}$ is defined via
$$L^t(\xi_1,\ldots,\xi_d,\tau)=\left(\frac{\xi_1+t\tau}{\sqrt{1-t^2}},\xi_2,\ldots,\xi_d,\frac{\tau+t\xi_1}{\sqrt{1-t^2}}\right).$$ 
Given an orthogonal matrix $A\in\text{O}(d)$, the map $(\xi,\tau)\mapsto(A\xi,\tau)$ belongs to $\mathcal{L}^+$.
A way to parametrize more general
Lorentz boosts is as follows.
Given a frequency parameter $\nu\in\R^d$, we define the Lorentz boost in the direction $\nu$ as
\begin{equation}\label{eq:Lorentz}
L_\nu(\xi,\tau):=
(\xi^\perp+\jp{\nu}\xi^\parallel-\nu \tau,\jp{\nu}\tau-\nu\cdot \xi).
\end{equation}
Here $\xi^\perp$ and $\xi^\parallel$  denote the components of $\xi$ which are orthogonal and parallel to $\nu$, respectively.
The boost $L_\nu$ preserves space-time volume since its determinant is one,
and acts on $\R^d$ via
\begin{equation}\label{eq:RdLorentz}
L_\nu^\flat(\xi):=\xi^\perp+\jp{\nu}\xi^\parallel-\nu \jp{\xi}.
\end{equation}
Note that $L_{\nu}^{-1}=L_{-\nu}$, and likewise $(L_{\nu}^\flat)^{-1}=L_{-\nu}^\flat$.
We also have that $L_\nu(\nu,\jp{\nu})=(0,1)$, and correspondingly $L_\nu^\flat(\nu)=0$.
For $p\in[1,\infty]$, $L\in\mathcal{L}^+$, and $f\in L^p(\H^d)$, define the composition $L^*f=f\circ L$.
Then one easily checks that
$$\|L^*f\|_{L^p(\H^{d})}=\|f\|_{L^p(\H^{d})}
\text{ and }
\|T(L^*f)\|_{L^p(\R^{d+1})}=\|T(f)\|_{L^p(\R^{d+1})}.$$

\subsection{Annular decoupling}\label{sec:AnnularDec} 
The extension operator $T$ defined in \eqref{eq:DefExtensionOp} satisfies more general mixed-norm estimates of which \eqref{ExtensionInequality} is a particular case.
As pointed out in \cite{KO11} and the references therein, the inequality
\begin{equation}\label{MixedNormKGStrichartz}
\|T(f)\|_{L^q_t L^r_x(\R^{d+1})}\lesssim \|\jp{\xi}^{\frac1q-\frac1r} f\|_{L^2_\xi(\H^d)}
\end{equation}
holds, provided 
$q\in[2,\infty]$,
$r\in[2,2d/(d-2)]$
($r\in[2,\infty]$ if $d\in\{1,2\}$), and
$$\frac2q+\frac{d-1+\theta}r=\frac{d-1+\theta}2,\;(q,r)\neq(2,\infty),$$
for some $\theta\in[0,1]$.
A pair $(q,r)$ of Lebesgue exponents satisfying these conditions will be referred to as an {\it admissible pair}.
Certain instances of inequality \eqref{MixedNormKGStrichartz} together with a variant of the Littlewood--Paley decomposition yield an annular decoupling inequality which we now prove.

\smallskip

We will use a dyadic frequency decomposition.
To implement it,
let $N\geq 1$ be a dyadic number. 
Given $f\in L^2(\H^d)$, we denote by $f_N$ the smoothed out restriction of $f$ to frequencies $|\xi|\simeq N$. More precisely, fix a smooth radial bump function $\psi:\R^d\to[0,1]$ supported in the ball $\{\xi\in\R^d:|\xi|\leq\frac{11}{10}\}$ and equal to 1 on the unit ball $\{\xi\in\R^d:|\xi|\leq1\}$, and define
\begin{displaymath}
f_N(\xi) := \left\{ \begin{array}{ll}
\psi(\xi){f}(\xi), & \textrm{if }N=1,\\
\big(\psi(\frac{\xi}N)-\psi(\frac{2\xi}N)\big){f}(\xi), & \textrm{if $N>1$.}
\end{array} \right.
\end{displaymath}
Note that $\supp(f_1) \subseteq \{\xi\in\R^d:|\xi| \leq 2\}$ and $\supp(f_N) \subseteq \{\xi\in\R^d:\tfrac N2 \leq |\xi|\leq 2N\}$, for $N >1$. 
The following annular decoupling is in the spirit of \cite{JSS, KSV12}.
\begin{proposition}\label{annulardec}
Let $d\geq 3$ and $\frac{2(d+2)}{d} \leq p \leq  \frac{2(d+1)}{d-1}$.
 Then
\begin{equation}\label{Annular_decoupling}
\|T(f)\|^p_{L^p(\R^{d+1})}\lesssim 
\sup_{N\in 2^{\Z_{\geq 0}}} \|T(f_N)\|_{L^p(\R^{d+1})}^{p-2} 
\|f\|_{L^{2}(\H^d)}^2,
\end{equation}
for every $f\in L^2(\H^d)$.
\end{proposition}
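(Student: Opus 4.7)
My plan is to combine the Littlewood--Paley decomposition $f=\sum_N f_N$ with the mixed-norm Strichartz estimates \eqref{MixedNormKGStrichartz} and bilinear restriction techniques on the hyperboloid. The target inequality should be viewed as a refinement of the standard Strichartz \eqref{ExtensionInequality}: writing $\|T(f)\|_p^p=\|T(f)\|_p^{p-2}\|T(f)\|_p^2$ and applying Strichartz to the $\|T(f)\|_p^2$ factor recovers the classical estimate, while our goal is to replace the first factor by the typically smaller quantity $\sup_N\|T(f_N)\|_p^{p-2}$.

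I would begin with the identity
\[\|T(f)\|_p^p=\int_{\R^{d+1}}|T(f)|^{p-2}\,|T(f)|^2\,\d x\,\d t\]
and the bilinear expansion $|T(f)|^2=\sum_{N,M}T(f_N)\overline{T(f_M)}$, splitting into diagonal ($N=M$) and off-diagonal ($N\neq M$) pieces. The diagonal terms are handled by Hölder's inequality with exponents $p/(p-2)$ and $p/2$, giving $\int|T(f)|^{p-2}|T(f_N)|^2\leq\|T(f)\|_p^{p-2}\|T(f_N)\|_p^2$; summed over $N$, this produces the baseline estimate $\|T(f)\|_p^2\lesssim\sum_N\|T(f_N)\|_p^2$, which is only enough for usual Strichartz. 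The refinement must come from the off-diagonal contributions: for $N\ll M$ the pieces $f_N,f_M$ lift to transversal portions of $\H^d$ (paraboloidal at small $|\xi|$, conical at large $|\xi|$, consistent with the introductory remarks), so a bilinear restriction estimate on $\H^d$ should yield a bound of the form
\[\|T(f_N)\overline{T(f_M)}\|_{L^{p/2}(\R^{d+1})}\lesssim(N/M)^{\gamma}\,\|T(f_N)\|_p\|T(f_M)\|_p\]
for some $\gamma>0$, effectively decoupling the scales.

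Plugging this bilinear bound back into the expansion and summing the resulting geometric series---using Hölder-type manipulations to extract a supremum in place of a sum---would produce
\[\|T(f)\|_p^p\lesssim\sup_N\|T(f_N)\|_p^{p-2}\,\sum_N\|f_N\|_2^2,\]
after which the desired inequality follows from the near-orthogonality $\sum_N\|f_N\|_2^2\lesssim\|f\|_2^2$ of the Littlewood--Paley pieces of $f\in L^2(\H^d)$. The main obstacle will be verifying the quantitative bilinear restriction estimate on $\H^d$ with the correct $N/M$-decay across the transitional regime $N\simeq 1\ll M$, where the hyperboloid's geometry passes qualitatively from the paraboloidal to the conical model and the hybrid approach emphasized in the introduction becomes essential.
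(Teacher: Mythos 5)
There is a genuine structural gap at the very first step. You write $\|T(f)\|_p^p=\int|T(f)|^{p-2}|T(f)|^2$ and handle the diagonal by H\"older against the \emph{full} factor $|T(f)|^{p-2}$, which produces $\|T(f)\|_p^{p-2}\sum_N\|T(f_N)\|_p^2\lesssim\|T(f)\|_p^{p-2}\|f\|_2^2$. Once the global norm $\|T(f)\|_p^{p-2}$ appears as a factor, no amount of off-diagonal decay can convert it into $\sup_N\|T(f_N)\|_p^{p-2}$: the diagonal alone already caps you at the trivial Strichartz bound, and the refinement is \emph{not} hiding in the cross terms. The point of the proposition is that every factor on the right is localized to a single annulus, so the $p-2$ powers must be extracted from single dyadic pieces from the start. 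The paper does this by first invoking the Littlewood--Paley square function estimate $\|T(f)\|_p^p\simeq\|(\sum_N|T(f_N)|^2)^{1/2}\|_p^p$ and then the embedding $\ell^{p/2}\hookrightarrow\ell^2$ (valid precisely because $d\geq3$ forces $p\leq4$, i.e. $p/2\leq2$ --- a step your sketch never uses), which reduces everything to $\sum_{N\leq M}\|T(f_M)T(f_N)\|_{p/2}^{p/2}$ with no global factor left over. Each such term is then split by a first H\"older into $\|T(f_M)\|_p^{p/2-1}\|T(f_N)\|_p^{p/2-1}\|T(f_M)T(f_N)\|_{p/2}$, and it is the last factor that supplies the off-diagonal decay and the $L^2$ norms.

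Your proposed off-diagonal mechanism is also off target. Dyadic annuli $N\ll M$ on $\H^d$ are not transversal in the bilinear-restriction sense, and the estimate you posit, with $\|T(f_N)\|_p\|T(f_M)\|_p$ on the right-hand side, is already true with $\gamma=0$ by H\"older and cannot hold with $\gamma>0$ in that form (test $f=f_N$, $M=N$ aside, the homogeneity is wrong); moreover it contains no $\|f_N\|_2$, so your final summation has no source for the factor $\sum_N\|f_N\|_2^2$. What the paper actually uses is the weighted mixed-norm Klein--Gordon Strichartz estimate \eqref{MixedNormKGStrichartz}: choosing admissible pairs $(q_0,r_0)$, $(q_1,r_1)$ with $\frac2p=\frac1{q_0}+\frac1{q_1}=\frac1{r_0}+\frac1{r_1}$, one gets $\|T(f_M)T(f_N)\|_{p/2}\lesssim(N/M)^{\frac1{q_1}-\frac1{r_1}}\|f_M\|_2\|f_N\|_2$, the gain coming from the inhomogeneous weights $\jp{\xi}^{\frac1q-\frac1r}\simeq N^{\frac1q-\frac1r}$ on each annulus, not from any transversality. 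The genuinely bilinear (Tao/Candy) estimates are reserved for the later cap/sector analysis within a single annulus. With these two corrections the geometric summation and $2ab\leq a^2+b^2$ do close the argument as you anticipate.
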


\begin{proof}
By the Littlewood--Paley square function estimate, we have that
\begin{equation}\label{eq:LPConsequence}
\|T(f)\|_{L_{x,t}^p}^p
\simeq\Big\|\Big(\sum_N |T (f_N)|^2\Big)^{\frac12}\Big\|_{L^p_{x,t}}^p.
\end{equation}
Indeed,  $\mathcal{F}_x[T(f)](\xi,t)=e^{it\langle\xi\rangle}\langle\xi\rangle^{-1}f(\xi)$, where $\mathcal{F}_x$ denotes the Fourier transform  in the variable $x\in\R^d$.
Standard Littlewood--Paley theory yields
$$\|T(f)(\cdot, t)\|_{L^p_x}^p\simeq\Big\|\Big(\sum_N |T(f_N)(\cdot,t)|^2\Big)^{\frac12}\Big\|_{L^p_x}^p,$$
for each fixed $t\in\R$.
Estimate \eqref{eq:LPConsequence} then follows from integration in the time variable $t$.

Since $d\geq 3$, we have that $\frac p2\leq 2$, and thus the sequence space embedding $\ell^{\frac p2}\hookrightarrow\ell^2$ implies
$$\Big(\sum_N |T (f_N)|^2\Big)^{\frac 12}\leq \Big(\sum_N |T (f_N)|^{\frac p2}\Big)^{\frac 2p}.$$
We can estimate
\begin{equation}\label{triangleineq}
\|T(f)\|_{L^p_{x,t}}^p
\lesssim\int_{\R^{d+1}} \Big(\sum_N |T(f_N)|^{\frac p2}\Big)^2
=\int_{\R^{d+1}} \sum_{N,M} |T (f_M) T (f_N)|^{\frac p2}
\lesssim \sum_{M} \sum_{N\leq M} \|T (f_M)T (f_N)\|_{L^{\frac p2}_{x,t}}^{\frac p2},
\end{equation}
where the last inequality follows from Fubini's theorem and symmetry.
We control each of the summands of the right-hand side of \eqref{triangleineq} using the mixed-norm estimates \eqref{MixedNormKGStrichartz}.
With this purpose in mind, fix admissible pairs $(q_0,r_0)$ and $(q_1,r_1)$ with $q_1<p<q_0$ and $r_0<p<r_1$, which additionally satisfy 
\begin{equation}\label{eq:MixedNormExponents}
\frac2p=\frac1{q_0}+\frac1{q_1}=\frac1{r_0}+\frac1{r_1}.
\end{equation}
Then, invoking H\"older's inequality twice, we have that
\begin{align*}
\|T (f_M)T (f_N)\|_{L^{\frac p2}}^{\frac p2}
&\leq\|T (f_M)\|_{L^p}^{\frac p2-1}\|T (f_N)\|_{L^p}^{\frac p2-1}\|T (f_M)T (f_N)\|_{L^{\frac p2}}\\
&\leq\|T (f_M)\|_{L^p}^{\frac p2-1}\|T (f_N)\|_{L^p}^{\frac p2-1}\|T (f_M)\|_{L_t^{q_0}L_x^{r_0}}\|T (f_N)\|_{L_t^{q_1}L_x^{r_1}}\\
&\lesssim
\|T (f_M)\|_{L^p}^{\frac p2-1}\|T (f_N)\|_{L^p}^{\frac p2-1}
\|\langle\xi\rangle^{\frac1{q_0}-\frac1{r_0}} f_M\|_{L^2(\H^d)}
\|\langle\xi\rangle^{\frac1{q_1}-\frac1{r_1}} f_N\|_{L^2(\H^d)}.
\end{align*}
where the last line is a consequence of \eqref{MixedNormKGStrichartz}.
Since $\langle\xi\rangle\simeq M$ inside the support of $f_M$, and similarly for $f_N$, from this and  \eqref{eq:MixedNormExponents} it follows that
$$\|T (f_M)T (f_N)\|_{L^{\frac p2}}^{\frac p2}
\lesssim
\Big(\frac NM\Big)^{\frac1{q_1}-\frac1{r_1}}
\|T (f_M)\|_{L^p}^{\frac p2-1}\|T (f_N)\|_{L^p}^{\frac p2-1}
\|f_M\|_{L^2(\H^d)}
\|f_N\|_{L^2(\H^d)}.
$$
Going back to \eqref{triangleineq} and noting that $\frac1{q_1}-\frac1{r_1}>0$, 
we use H\"older's inequality and the elementary estimate $2ab\leq a^2+b^2$ with $a=\|f_N\|_{L^2(\H^d)}$ and $b=\|f_M\|_{L^2(\H^d)}$,
and sum a geometric series to  finally conclude that
\begin{align*}
\sum_{M} \sum_{N\leq M} \|T (f_M)T (f_N)\|_{L^{\frac p2}}^{\frac p2}
&\leq\sup_N \|T (f_N)\|_{L^p}^{p-2}
\sum_{M} \sum_{N\leq M} 
\Big(\frac NM\Big)^{\frac1{q_1}-\frac1{r_1}}
\|f_M\|_{L^2(\H^d)}
\|f_N\|_{L^2(\H^d)}\\
&\lesssim\sup_N \|T (f_N)\|_{L^p}^{p-2}
\|f\|^2_{L^2(\H^d)}.
\end{align*}
This finishes the proof of the proposition.
\end{proof}

%%%%%%%%%%%%%%%%%%%%%%%%%%%%%%%%%%%%%%%%%%%%%%%%%%%%%%%%%%%%%%%%%%%%%%%%%%%%%%%%%%%%%%%%%%%%%%%%%%%%%%%%%%%%%%%%%%%%%%%%%%%%%%%%%%%%%%%%%%%%%%%%
  
\section{Beginning of the proof: angular restriction} \label{sec:ang_rest}

Let $\{f_n\}_{n\in\N} \subset L^2(\mathbb{H}^d)$ be an extremizing sequence for \eqref{ExtensionInequality}. We may assume that $\|f_n\|_{L^2(\mathbb{H}^d)} = 1$ and that $\|T(f_n)\|_{L^p(\R^{d+1})} \to {\bf H}_{d,p}$ as $n \to \infty$. Recall from the Introduction that each $f_n$ is regarded as a function on $\R^d$. 
Given $K\in\N$,  consider a finite partition of the unit sphere $\mathbb{S}^{d-1} =\{\xi\in \R^d: |\xi|=1\}$ into $K$ disjoint regions,
$$\mathbb{S}^{d-1} = \bigcup_{k=1}^{K} C_k^*.$$
Given a function $f:\R^d \to \mathbb{C}$, let $f^{(k)} := f \ \one_{R_k}$, where $R_k = \{\xi \in \R^d : \xi/|\xi| \in C_k^*\}$. In this way we split $\R^d$ into $K$ angular sectors. The triangle inequality implies  
$$\|T(f_n)\|_{L^p(\R^{d+1})} \leq \sum_{k=1}^{K} \|T(f_n^{(k)})\|_{L^p(\R^{d+1})}.$$
Observe that, possibly after extraction of a subsequence, there exists $k_0 \in \{1,2,\ldots, K\}$ such that $\{f_n^{(k_0)}\}_{n\in\N}$ is a {\it quasi-extremizing sequence} for \eqref{ExtensionInequality}. 
By this we mean that $\|f_n^{(k_0)}\|_{L^2(\mathbb{H}^d)} \leq 1$, and 
\begin{equation}\label{quasi-ext}
\|T(f_n^{(k_0)})\|_{L^p(\R^{d+1})}\geq \delta_1,
\end{equation}
for every $n\in\N$ and some universal $\delta_1 >0$ (we may take for instance $\delta_1 = \frac{{\bf H}_{d,p}}{2K}$).

\smallskip

Under these circumstances, we will establish the existence of 
a universal ball $B \subset \R^d$ centered at the origin, 
a universal $\delta_2 >0$, 
and a sequence of Lorentz transformations $\{L_n\}_{n\in\N}$ such that 
$$\|L_n ^* f_n^{(k_0)}\|_{L^2(B)} \geq \delta_2,$$
for every $n\in\N$.
This naturally implies 
$$\|L_n^* f_n\|_{L^2(B)} \geq \delta_2,$$
for every $n\in\N$.
The latter inequality is of the sort which is required in order to invoke the machinery from \cite[Section 6]{COSS17} and conclude the proof of Theorem \ref{Thm1}.

\smallskip

Throughout the upcoming Sections \ref{sec:Caps} and \ref{sec:RefinedStrichartz} we will thus assume that our functions are supported in a small angular region $R_1$
(the corresponding $C_1^* \subset \mathbb{S}^{d-1}$ is described at the beginning of Section \ref{sec:Caps}).
Henceforth, such functions will be referred to as {\it admissible}.

%%%%%%%%%%%%%%%%%%%%%%%%%%%%%%%%%%%%%%%%%%%%%%%%%%%%%%%%%%%%%%%%%%%%%%%%%%%%%%%%%%%%%%%%%%%%%%%%%%%%%%%%%%%%%%%%%%%%%%%%%%%%%%%%%%%%%%%%%%%%%%%%

\section{Caps, sectors and bilinear estimates}\label{sec:Caps}
As mentioned in the Introduction, one of the key ingredients in the proof of Theorem \ref{Thm1} is the use of tools from bilinear restriction theory. Classical works on the topic include \cite{Ta03, TVV98, Wo01}.
\smallskip

In this section, we define the appropriate geometric regions and the notion of separation between them, and establish the bilinear restriction estimates that will be of relevance in the sequel.

\subsection{Definition of dyadic regions}  
Let $d \geq 3$ be a fixed dimension.
Consider the $(d-1)$-dimensional cube
$$C_1 = \{\eta = (\eta_1, \eta_2, \ldots, \eta_{d-1}) \in \R^{d-1} : |\eta_i| \leq \ell, \; i=1,2,\ldots,d-1\}$$
of sidelength $2\ell$ centered at the origin. The quantity $\ell < \tfrac14$ is a small fixed number which depends only on the dimension $d$, and shall be appropriately chosen in due course. Given a dyadic number $M \in 2^{\Z_{\leq0}}$, let $\Gamma_M$ denote the usual dyadic decomposition of the cube $C_1$ into cubes of sidelength $2\ell M$ on $\R^{d-1}$. In particular, $\Gamma_1 = \{C_1\}$, and $\Gamma_M$ consists of $M^{-(d-1)}$ {\it essentially disjoint} cubes
(i.e. the intersection of any two distinct cubes is a Lebesgue null-set).
 Let $* \ : C_1 \to \mathbb{S}^{d-1}$ be the lift of a point in $C_1$ to a point in the unit sphere $\mathbb{S}^{d-1} \subset \R^d$, defined via
$$\eta^* = (\eta, (1 - |\eta|^2)^{\frac12}).$$
For each cube $Q \in \Gamma_M$, let 
$$Q^*  = \{\eta^* : \eta \in Q\}$$
denote the lift of the cube $Q$, and let $\Gamma^*_M$ denote the collection of the lifted cubes of $\Gamma_M$.

\smallskip

For the purposes of the present construction, we may think of distances in $C^*_1 \subset \mathbb{S}^{d-1} \subset \R^d$ as being almost the same as Euclidean distances in $C_1 \subset \R^{d-1}$. More precisely, given any constant $\varepsilon_1 >0$, we may choose $\ell = \ell(d,\varepsilon_1)>0$ sufficiently small, such that 
\begin{equation}\label{Sphere correction}
|\eta - \zeta|\leq\text{dist}(\eta^*, \zeta^*) \leq (1 + \varepsilon_1) \,|\eta - \zeta|
\end{equation}
for all $\eta, \zeta \in C_1 \subset \R^{d-1}$. Here $| \cdot | $ denotes Euclidean distance in $\R^{d-1}$, and dist$( \cdot, \cdot)$ denotes the geodesic distance on $\mathbb{S}^{d-1} \subset \R^d$. We may take for instance $\varepsilon_1 = \frac1{100}$.

\smallskip

Given $N\in2^{\Z_{>0}}$, define the restricted dyadic annulus
\begin{gather}\label{eq:DefAnnulus}
\mathcal{A}_N:=\big\{\xi\in\R^d: \tfrac12 N\leq|\xi|\leq 2N \ {\rm and} \ \xi/|\xi| \in C_1^*\big\},
\end{gather}
and set $\mathcal{A}_1:=\{\xi\in\R^d: |\xi|\leq 2  \ {\rm and} \ \xi/|\xi| \in C_1^*\}$. 

\smallskip

Given $N\in2^{\Z_{\geq0}}$, let $r\in 2^\Z$ be such that $0<r\leq N$.
If $0<r\leq1$, then we further decompose the restricted annulus $\mathcal{A}_N$ into an essentially disjoint union of regions 
\begin{equation}\label{height1}
\mathcal{A}_N^{(j)}:=\big\{\xi\in\mathcal{A}_N: \tfrac12 N(1+3j r)\leq|\xi|\leq \tfrac12 N(1+3(j+1)r)\big\},
\end{equation}
for $j\in J:=\{0,1,\ldots,r^{-1}-1\}$.
If $1< r\leq N$, then we unify the notation below by letting $J = \{0\}$ and $\mathcal{A}_N^{(0)}:=\mathcal{A}_N$.
In both cases we then have that $\# J=\max\{1,r^{-1}\}$.

\smallskip

Given $N\in2^{\Z_{\geq0}}$, and $r\in 2^\Z$ such that $0<r\leq N$, let $M = r/N$ and consider
$$\mathcal{D}_{N,r}
:=\big\{\kappa_{N,r}^{j,k}: (j,k)\in J\times \{1,2,\ldots, M^{-(d-1)}\}\big\},$$
where the regions $\kappa=\kappa_{N,r}^{j,k}$ are defined as
\begin{equation}\label{eq:DefTau}
\kappa_{N,r}^{j,k}:=\big\{\xi\in \mathcal{A}_N^{(j)}: \xi/|\xi| \in Q_k^*\big\},
\end{equation}
and $Q_k^*$ is a cube in the collection $\Gamma_M^*$.
The center of a region $\kappa=\kappa_{N,r}^{j,k}$ as in \eqref{eq:DefTau} is \mbox{defined to be}
\begin{equation}\label{eq:DefCenter}
c(\kappa):=\tfrac12 N\big(1+3\min\{1,r\} (j+\tfrac12)\big)\,\omega_k^*,
\end{equation}
where $\omega_k^*\in\mathbb{S}^{d-1}$ is the lift of the center $ \omega_k$ of the cube $Q_k \in \Gamma_M$.

\smallskip

If $0<r\leq 1$, then an element of $\mathcal{D}_{N,r}$ is called an {\it $r$-cap at scale $N$}.
If $1<r\leq N$, then an element of $\mathcal{D}_{N,r}$ is called an {\it $r$-sector at scale $N$}.
The Lebesgue measure of an $r$-cap  at scale $N$ is comparable to $Nr^d$, and 
the Lebesgue measure of an $r$-sector at scale $N$ is comparable to $Nr^{d-1}$.

\smallskip

By a {\it region} we will continue to mean a set which is either a cap or a sector. 
For fixed $N,r$, the regions in $\mathcal{D}_{N,r}$ are essentially disjoint. 
If $r < N$, then each $\kappa \in \mathcal{D}_{N,r}$ is contained in a unique $\kappa^\circ \in \mathcal{D}_{N,2r}$, and we refer to $\kappa^\circ$ as the {\it parent} of $\kappa$.
In a similar spirit, each $\kappa \in \mathcal{D}_{N,r}$ has either $2^{d-1}$ or $2^{d}$ {\it children}, according to the change of regime when $r=1$.

\smallskip

The construction outlined above can be regarded as a hybrid between a dyadic decomposition on $\R^{d}$ (caps) and on $\R^{d-1}$ (sectors), and is convenient to treat the elliptic and conic regimes in a unified way.

\subsection{Separated regions}\label{sec:separated} 

We call two regions {\it adjacent} if their closures intersect, possibly at boundary points.
We say that two regions $\kappa,\kappa'\in \mathcal{D}_{N,r}$ are {\it separated}, and write $\kappa\sim\kappa'\in \mathcal{D}_{N,r}$, if $\kappa,\kappa'$ are not adjacent, their parents are not adjacent, their $2$-parents (i.e. grandparents) are not adjacent, \ldots, their $(d-1)$-parents are not adjacent, and their $d$-parents are adjacent. Naturally, this assumes that $r \leq N/2^d$, so that $\kappa,\kappa'$ indeed have ancestors up to the $d$-th generation. 
The main reason why we climb up $d$ degrees in the genealogical tree when defining separation is to ensure that certain  naturally arising geometric regions which contain $\kappa, \kappa'$  are also ``separated''. In fact, as will become clear from the proof below, around $k$ generations up in the tree with $k \simeq \log_2 d$ would morally suffice.

If $\kappa,\kappa'\in \mathcal{D}_{N,r}$ are separated regions, then either: (i) the angular distance between $c(\kappa)$ and $c(\kappa')$ (which is $\simeq N |\omega_k^* - \omega_{k'}^*|$) is comparable to $r$; or (ii) the radial distance between $c(\kappa)$ and $c(\kappa')$ is comparable to $N r$. Note that option (ii) is only available if $0 < r < 2^{-d}$. 

\smallskip

Defining the regions and the separation between them in this way, we ensure that the union in the forthcoming expression \eqref{eq:Whitney} is essentially disjoint, an important step in the proof of the refined Strichartz estimate.

\subsection{Bilinear estimates} If $\kappa\in\mathcal{D}_{N,r}$ is a dyadic region as defined in the previous subsection, then  we set $f_{\kappa} := f \one_{\kappa}$. The main result of this section is the following.
 \begin{proposition}
 Let $d\geq 3$ and $\frac{2(d+2)}{d} \leq p\leq  \frac{2(d+1)}{d-1}$. Then there exists an exponent $1\leq s<2$, which can be taken arbitrarily close to $2$, for which the following bilinear extension estimates hold, uniformly in $N,r,f,g$. Let $f,g\in L^2(\R^d)$ be admissible functions, and let $N\geq 1$ be a dyadic number.

\begin{enumerate}
\item[(i)] If $0<r\leq 1$ is a dyadic number, and $\kappa\sim\kappa'\in\mathcal{D}_{N,r}$, then
\begin{equation}\label{Bilinear1}
\|T(f_\kappa) T(g_{\kappa'})\|_{L^{\frac p2}(\R^{d+1})}\lesssim_s N^{-\frac2s} r^{\frac{2d}{s'}-\frac{2(d+2)}{p}} \|f_\kappa\|_{L^s(\R^d)}\|g_{\kappa'}\|_{L^s(\R^d)}.
\end{equation}

\item[(ii)] If $1<r\leq N$ is a dyadic number, and $\kappa\sim\kappa'\in\mathcal{D}_{N,r}$, then
 \begin{equation}\label{Bilinear2}
\|T(f_\kappa)  T(g_{\kappa'})\|_{L^{\frac p2}(\R^{d+1})}\lesssim_s N^{-\frac2s} r^{\frac{2(d-1)}{s'}-\frac{2(d+1)}{p}} \|f_\kappa\|_{L^s(\R^d)}\|g_{\kappa'}\|_{L^s(\R^d)}.
\end{equation}
\end{enumerate}
 \end{proposition}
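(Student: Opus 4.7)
The plan is a two-step reduction that handles each bilinear estimate by passing to a model surface. In the elliptic regime $(0 < r \leq 1)$ the model is the paraboloid and the reference result is Tao's bilinear restriction theorem \cite{Ta03}; in the conic regime $(1 < r \leq N)$ the model is the light cone and the reference is Wolff's bilinear cone restriction \cite{Wo01}. In both cases, the $L^s$-formulation with $s < 2$ close to $2$ is obtained by bilinear interpolation with a trivial $L^\infty$ bound.

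For case (i), I would first apply a Lorentz boost that maps a suitable ancestor of $\kappa$ to a neighborhood of the vertex of $\H^d$; this preserves both $\|\cdot\|_{L^p(\H^d)}$ and $\|T(\cdot)\|_{L^p(\R^{d+1})}$, and brings $\kappa, \kappa'$ to a separated pair of $r$-caps at scale $1$. A parabolic rescaling $\xi = r\xi'$, $x = r^{-1}y$, $t = r^{-2}s$ then converts the hyperboloid cap (whose height above the tangent plane is $\simeq r^2$) into an $O(1)$-cap on the paraboloid $\tau = |\xi'|^2/2$, modulo a lower-order phase correction. Tao's bilinear theorem, valid for $p$ strictly above $2(d+2)/d$, supplies $\|T_P(f')T_P(g')\|_{L^{p/2}} \lesssim \|f'\|_{L^2(\R^d)} \|g'\|_{L^2(\R^d)}$ on the model. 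Bookkeeping the Jacobians --- namely the factor $N^{-1/2}$ from converting $L^2(\H^d)$ to $L^2(\R^d)$ on a scale-$N$ cap, together with the combined factor $r^{d - 2(d+2)/p}$ from the parabolic rescaling of the extension operator --- recovers \eqref{Bilinear1} in the endpoint case $s = 2$. The general-$s$ estimate then follows by bilinear interpolation with the trivial bound $\|T(f_\kappa)T(g_{\kappa'})\|_{L^\infty(\R^{d+1})} \lesssim N^{-2/s} r^{2d/s'} \|f_\kappa\|_{L^s(\R^d)} \|g_{\kappa'}\|_{L^s(\R^d)}$, itself obtained from $|T(h_\kappa)(x,t)| \leq N^{-1} \|h_\kappa\|_{L^1(\R^d)} \leq N^{-1} |\kappa|^{1/s'}\|h_\kappa\|_{L^s(\R^d)}$ combined with $|\kappa| \simeq N r^d$.

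Case (ii) follows the same template, but with a conic rescaling in place of the parabolic one. After a Lorentz boost normalizing the direction of $\kappa$'s ancestor, a hyperbolic dilation adapted to the cone $\tau = |\xi|$ converts the sector (of angular width $r$ but full radial extent $\simeq N$) into a transverse pair of $O(1)$-sectors on a conic model. Wolff's bilinear cone theorem, valid for $p > 2(d+1)/(d-1)$, delivers the analogous $L^2 \times L^2 \to L^{p/2}$ bound. The shift in the exponents in \eqref{Bilinear2} relative to \eqref{Bilinear1}, namely $r^{2(d-1)/s' - 2(d+1)/p}$ in place of $r^{2d/s' - 2(d+2)/p}$, reflects that in the conic scaling the radial direction is flat --- there are only $d-1$ parabolic directions --- and $|\kappa| \simeq N r^{d-1}$ replaces $N r^d$ at the trivial-bound endpoint of the interpolation.

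The main obstacle will be verifying that the combinatorial separation $\kappa \sim \kappa'$, defined via non-adjacency of ancestors up to generation $d$ but adjacency at generation $d$, translates after Lorentz boost and rescaling into the quantitative transversality of normals required by Tao's and Wolff's hypotheses on the model surface. The rationale for climbing $d$ generations (rather than, say, $\log_2 d$) is precisely that after rescaling, the dilated caps or sectors sit at $O(1)$ distance on the paraboloid or cone with an $O(1)$ angular separation. Bookkeeping the Jacobians to recover the precise powers of $N$ and $r$ is tedious but routine, and the unavailability of Tao's and Wolff's theorems at the critical exponent is the underlying reason for taking $s$ strictly less than $2$.
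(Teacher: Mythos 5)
Your overall architecture --- Lorentz boost to normalize the cap, parabolic (resp.\ conic) rescaling to a model surface, the bilinear theorem of Tao (resp.\ Wolff), interpolation with a trivial $L^1\times L^1\to L^\infty$ bound, and Jacobian bookkeeping --- is exactly the paper's proof of \eqref{Bilinear1}, and for \eqref{Bilinear2} it is the route the paper explicitly mentions as viable (via \cite{Wo01} or \cite{LeeVargas kflat}) before opting for a shortcut: Candy's multi-scale bilinear estimate \cite[Theorem 1.10]{Ca17} already delivers the correctly scaled $L^2\times L^2\to L^q$ bound \eqref{eq:FromTim} for the hyperboloid sectors directly, so no conic rescaling is carried out. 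Your Jacobian accounting and the form of the trivial bound are correct, and you rightly flag the transversality verification as the point requiring care.

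There is, however, a genuine problem with the interpolation step as you describe it, tied to a misquotation of the bilinear ranges. Tao's and Wolff's theorems give $L^2\times L^2\to L^q$ for all $q>\tfrac{d+3}{d+1}$ --- not merely for $\tfrac p2$ with $p$ above the linear exponents $\tfrac{2(d+2)}{d}$ and $\tfrac{2(d+1)}{d-1}$ as you state. This matters twice over. First, taken literally, your quoted range for Wolff would make case (ii) fail for every $p\leq\tfrac{2(d+1)}{d-1}$, i.e.\ the entire range of the proposition. Second, and more structurally: interpolating the $s=2$ estimate with target $L^{p/2}$ against the trivial $L^1\times L^1\to L^\infty$ bound produces estimates with target $L^q$ for $q>\tfrac p2$, not $L^{p/2}$, so it cannot yield \eqref{Bilinear1} with $s<2$. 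To land at $L^{p/2}$ with $s<2$ one must feed the interpolation a bilinear input at some exponent $q_0$ strictly \emph{below} $\tfrac p2$, which exists precisely because $\tfrac p2\geq\tfrac{d+2}{d}>\tfrac{d+3}{d+1}$; this is how the paper passes from \eqref{eq:TaoBilinear} and \eqref{eq:trivial} to \eqref{eq:PreChange}, with $s\to2$ as $q_0\to\tfrac p2$. Your closing remark that $s<2$ is forced by ``unavailability at the critical exponent'' has the logic backwards: it is the availability of the bilinear estimates strictly below $\tfrac p2$ that creates the room for $s<2$, and the whole range of the proposition, both endpoints included, is covered. Once these two points are repaired the argument closes, modulo the uniformity issue (the rescaled surfaces $\Sigma_r$ must form a uniformly elliptic family so that Tao's constants are independent of $r$), which the paper addresses explicitly.
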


\begin{proof}
We first establish the estimate in the elliptic regime $0<r\leq1$. 
The proof consists of a rescaling of the bilinear extension result of Tao \cite{Ta03}.
We start by constructing affine transformations that map separated caps $\kappa\sim\kappa'\in\mathcal{D}_{N,r}$ into unit separated regions.

\smallskip

{\it Boosted caps.}
Let $N\geq 1$ and $0<r\leq 1$ be dyadic numbers, and let $\kappa\sim\kappa'\in\mathcal{D}_{N,r}$.
Let $\widetilde{\kappa}, \widetilde{\kappa}'$ denote the {\it lifts} of the caps $\kappa,\kappa'$ into the hyperboloid $\mathbb{H}^d$, defined as 
\begin{equation}\label{lift_def}
\widetilde{\kappa}=\{(\xi,\jp{\xi}):\xi\in\kappa\},\qquad \widetilde{\kappa}'=\{(\xi,\jp{\xi}):\xi\in\kappa'\}.
\end{equation}
Let $\xi_0=c(\kappa)$ denote the center of the cap $\kappa$ as in \eqref{eq:DefCenter},
and let $L_{\xi_0}$ be the Lorentz transformation defined in \eqref{eq:Lorentz} with $\nu=\xi_0$.
Then $L_{\xi_0}$ maps $\widetilde{\kappa}, \widetilde{\kappa}'$ into the lifts $\widetilde{\lambda}, \widetilde{\lambda}'$ of 
sets  $\lambda:=L_{\xi_0}^\flat (\kappa)$ and $\lambda':=L_{\xi_0}^\flat (\kappa')$ which are contained in $r$-separated cubes of sidelength comparable to $r$.
Moreover, we can take the center of the cube containing $\lambda$ to be $L^\flat_{\xi_0}(\xi_0)=0$.
Recall that the Lorentz boost $L_{\xi_0}$ is volume preserving, $\det (L_{\xi_0})=1$.
Moreover, on $\kappa\cup\kappa'$, the map $L_{\xi_0}^\flat$ 
has Jacobian determinant $\det (DL_{\xi_0}^\flat)\simeq N^{-1}$.

\smallskip

{\it Parabolic rescaling.}
The region $\{(\xi,\jp{\xi})\in\R^d\times\R: |\xi|\lesssim 1\}$ is of elliptic type, in the terminology of \cite[Section 9]{Ta03}.
The parabolic rescaling
$$P_r(\xi,\tau):=\big(\tfrac\xi r,\tfrac{\tau-1}{r^2}\big),$$
maps the lifts $\widetilde{\lambda}, \widetilde{\lambda}'$ defined above to the lifts $\widetilde{\rho},\widetilde{\rho}'$ into the compact hypersurface
\begin{equation}\label{eq:Sigmar}
\Sigma_r:=\big\{\big(\xi,\tfrac{\jp{r\xi}-1}{r^2}\big): |\xi|\lesssim1\big\}
\end{equation}
of $O(1)$-separated sets $\rho,\rho'$ of diameter comparable to 1.
Let $P_r^\flat:\R^d\to\R^d$ denote the map $\xi\mapsto r^{-1}{\xi}$,
whose Jacobian determinant satisfies $\det (D P_r^\flat)=r^{-d}$.
Note that $P^\flat_{r^{-1}}\circ P^\flat_r=\text{Id}$, and that $P_r$ is an affine map whose linear part has determinant equal to $r^{-(d+2)}$.

\smallskip

{\it Bilinear extension of caps.}
With $\rho,\rho'$ as defined above, set $f_\rho:=f\one_\rho$ and $g_{\rho'}:=g\one_{\rho'}$.
Let $\mathcal{E}_r$ denote the Fourier extension operator associated to the hypersurface $\Sigma_r$ defined in \eqref{eq:Sigmar},
$$\mathcal{E}_r (f)(x,t):=\int_{\R^d} e^{ix\cdot\xi} e^{it\Phi_r(\xi)} f(\xi)\,\d\xi,$$
with phase function given by $\Phi_r(\xi):=\frac{\jp{r\xi}-1}{r^2}$.
The hypersurfaces $\{\Sigma_r\}_{0<r\leq 1}$ are uniformly elliptic in the sense of \cite{TVV98}.
As a consequence of Tao's bilinear extension theorem for general elliptic hypersurfaces \cite[Section 9]{Ta03},
the  estimate 
\begin{equation}\label{eq:TaoBilinear}
\|\mathcal{E}_r (f_\rho)\mathcal{E}_r (g_{\rho'})\|_{L^q}
\lesssim \|f_\rho\|_{L^2}\|g_{\rho'}\|_{L^2},\qquad q>\tfrac{d+3}{d+1},
\end{equation}
holds, uniformly in $0<r\leq1$.
Using the Riesz--Th\"orin convexity theorem to interpolate the latter inequality with the trivial estimate
\begin{equation}\label{eq:trivial}
\|\mathcal{E}_r (f_\rho)\mathcal{E}_r (g_{\rho'})\|_{L^\infty}
\leq \|f_\rho\|_{L^1}\|g_{\rho'}\|_{L^1},
\end{equation}
we conclude the existence of $s_0<2$, such that
\begin{equation}\label{eq:PreChange}
\|\mathcal{E}_r (f_\rho)\mathcal{E}_r (g_{\rho'})\|_{L^{\frac p2}}
\lesssim \|f_\rho\|_{L^s}\|g_{\rho'}\|_{L^s},
\end{equation}
for every $s\in(s_0,2)$.
We claim that \eqref{Bilinear1} follows from \eqref{eq:PreChange} by a standard change of variables, which we now present in detail. 
Start by noting that 
$f_\rho=f_\kappa\circ L_{-\xi_0}^\flat\circ P_{r^{-1}}^\flat.$
It follows that
\begin{equation}\label{eq:Change1}
\|f_\rho\|_{L^s}^s
=\int_{\rho} |f_\kappa(L_{-\xi_0}^\flat\circ P_{r^{-1}}^\flat(\xi))|^s \d\xi
\simeq (Nr^d)^{-1}\|f_\kappa\|_{L^s}^s,
\end{equation}
since on $\kappa$ the change of variables $\xi=(P_r^\flat\circ L_{\xi_0}^\flat)(\zeta)$ has Jacobian determinant comparable to 
$$\det (DP_r^\flat)\det (DL_{\xi_0}^\flat)\simeq r^{-d}N^{-1}.$$
On the other hand, a straightforward computation shows that
$$\mathcal{E}_r (f_\rho)(x,t)=r^{-d}e^{-i\frac{t}{r^2}} \int_{\R^{d+1}} e^{i(\frac xr,\frac t{r^2})\cdot(\xi,\tau)} f_\kappa(L_{-\xi_0}(\xi,\tau)) \ddirac{\tau^2-\jp{\xi}^2}\jp{\xi} \,\d \xi\,\d \tau,$$
and so another change of variables $L_{-\xi_0}(\xi,\tau)=(\xi',\tau')$ yields
$$\mathcal{E}_r (f_\rho)(x,t)=r^{-d}e^{-i\frac{t}{r^2}} \int_{\R^{d}} e^{iL_{\xi_0}^T(\frac xr,\frac t{r^2})\cdot(\xi,\jp{\xi})} f_\kappa(\xi)\jp{L_{\xi_0}^\flat(\xi)} \frac{\d \xi}{\jp{\xi}}.$$
This in turn can be rewritten as
$$\mathcal{E}_r (f_\rho)(x,t)=r^{-d}e^{-i\frac{t}{r^2}}T(f_\kappa{\jp{L_{\xi_0}^\flat(\cdot)}})(L_{\xi_0}^T(\tfrac xr,\tfrac t{r^2})),$$
and so, in particular,
$$|\mathcal{E}_r (f_\rho)(x,t)|=r^{-d}|T(f_\kappa{\jp{L_{\xi_0}^\flat(\cdot)}})((L_{\xi_0}^T\circ D_r)(x,t))|,$$
where $D_r$ denotes the parabolic dilation $D_r(x,t):=(\frac xr, \frac t{r^2})$.
It follows that
\begin{equation}\label{eq:Change2}
\big\|\mathcal{E}_r (f_\rho) \mathcal{E}_r (g_{\rho'})\big\|_{L^{\frac p2}}^{\frac p2}=r^{-dp} r^{d+2} \big\|T(f_\kappa{\jp{L_{\xi_0}^\flat(\cdot)}}) T(g_{\kappa'}{\jp{L_{\xi_0}^\flat(\cdot)}})\big\|_{L^{\frac p2}}^{\frac p2}.
\end{equation}
Since $\jp{L_{\xi_0}^\flat(\xi)}\simeq 1$ if $\xi \in\kappa\cup\kappa'$, inequality \eqref{Bilinear1} is now easily seen to follow from \eqref{eq:PreChange}, \eqref{eq:Change1} and \eqref{eq:Change2}. This concludes the verification of the elliptic case.

\smallskip

For the conic case $1<r\leq N$, we can follow a similar path, invoking either Wolff's bilinear estimates for the cone \cite{Wo01} or a variant on Tao's estimates for the paraboloid noted in \cite{LeeVargas kflat}.
We choose to take a shortcut, noting that Candy's recent work \cite{Ca17} on bilinear restriction estimates for general phases already implies the adequate rescaled substitute of \eqref{eq:TaoBilinear} in the conic regime.
More precisely, \cite[Theorem 1.10]{Ca17} specializes to the inequality
\begin{equation}\label{eq:FromTim}
\|T(f_\kappa)T(g_{\kappa'}) \|_{L^q}\lesssim N^{-1}r^{d-1-\frac{d+1}q}\|f_\kappa\|_{L^2}\|g_{\kappa'}\|_{L^2},\;\;\;q>\tfrac{d+3}{d+1}.
\end{equation}
As before, this can be interpolated with the trivial 
$$\|T(f_\kappa)T(g_{\kappa'}) \|_{L^\infty}\lesssim N^{-2}\|f_\kappa\|_{L^1}\|g_{\kappa'}\|_{L^1}$$
to yield \eqref{Bilinear2}.
The proof is now complete.
\end{proof}

%%%%%%%%%%%%%%%%%%%%%%%%%%%%%%%%%%%%%%%%%%%%%%%%%%%%%%%%%%%%%%%%%%%%%%%%%%%%%%%%%%%%%%%%%%%%%%%%%%%%%%%%%%%%%%%%%%%%%%%%%%%%%%%%%%%%%%%%%%%%%%%%

\section{A refined Strichartz estimate} \label{sec:RefinedStrichartz}
There exists a well-established program, using tools from Littlewood--Paley theory, Whitney-type decompositions and quasi-orthogonality, to derive refined inequalities of Strichartz type from bilinear restriction estimates, see for instance the works \cite{BV07, KV13, KSV12, Ra12}.

The goal of this section is to establish the following refinement of inequality \eqref{ExtensionInequality} which holds for admissible functions in each dyadic annulus. 
\begin{theorem}\label{Sharpened}
Let $d\geq 3$ and $\frac{2(d+2)}{d} \leq p\leq  \frac{2(d+1)}{d-1}$.
Then there exists $\gamma\in (0,1-\frac2p)$ such that the following inequality holds
\begin{align}
\|T (f_N)\|_{L^p(\R^{d+1})}^p
\lesssim&
\Bigg[
\sup_{0<r\leq 1}(r^d)^{(\frac p2-\frac{d+2}{d})(1-\gamma)} \Big(\sup_{\kappa\in\mathcal{D}_{N,r}} 
 \|T (f_\kappa)\|_{L^p(\R^{d+1})}^{p\gamma}\Big)\notag\\
&+
\sup_{1<r\leq N} (r^{d-1})^{(\frac p2-\frac{d+1}{d-1})(1-\gamma)}\Big(\sup_{\kappa\in\mathcal{D}_{N,r}} 
 \|T (f_\kappa)\|_{L^p(\R^{d+1})}^{p\gamma}\Big)
\Bigg]
\|f_N\|_{L^2(\H^d)}^{p(1-\gamma)},\label{sharpenedextensionineq}
\end{align}
for every dyadic number $N\geq 1$ and admissible function $f\in L^2(\H^d)$.
\end{theorem}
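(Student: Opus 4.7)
The plan is to follow the standard bilinear-to-linear refinement scheme: Whitney-decompose $|T(f_N)|^2$ into products of separated caps and sectors, apply the bilinear estimates of the preceding Proposition scale by scale, interpolate with the linear Strichartz inequality, and sum the dyadic contributions.

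I would first expand $|T(f_N)|^2 = \sum_r \sum_{\kappa\sim\kappa'\in\mathcal{D}_{N,r}} T(f_\kappa)\overline{T(f_{\kappa'})}$ via the Whitney decomposition of Section \ref{sec:Caps} (absorbing the essentially diagonal terms), where $r$ runs over the dyadic scales in both the elliptic regime ($r\leq 1$) and the conic regime ($1<r\leq N$). The $d$-generational separation condition introduced in Section \ref{sec:separated} is precisely what is needed to guarantee that, both within a fixed scale and across distinct scales, the space-time Fourier supports of the products $T(f_\kappa)\overline{T(f_{\kappa'})}$ for distinct separated pairs have bounded overlap (the relevant difference sets $\widetilde\kappa-\widetilde{\kappa'}$ lie in quasi-disjoint annuli of radius $\simeq r$). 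Combined with the embedding $\ell^{p/2}\hookrightarrow\ell^2$ (valid since $p/2\leq 2$ when $d\geq 3$), a standard quasi-orthogonality argument then yields $\|T(f_N)\|_{L^p}^p \lesssim \sum_r\sum_{\kappa\sim\kappa'\in\mathcal{D}_{N,r}} \|T(f_\kappa)T(f_{\kappa'})\|_{L^{p/2}}^{p/2}$.

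For each scale $r$, I would apply the Proposition with some $s\in[1,2)$, then use H\"older's inequality $\|f_\kappa\|_{L^s(\R^d)}\leq |\kappa|^{1/s-1/2}\|f_\kappa\|_{L^2(\R^d)}$ with $|\kappa|\simeq Nr^d$ in the elliptic regime (resp.\ $Nr^{d-1}$ in the conic), together with the comparison $\|\cdot\|_{L^2(\R^d)}\simeq N^{1/2}\|\cdot\|_{L^2(\H^d)}$ on $\mathcal{A}_N$, to pass to an $L^2(\H^d)$-based bilinear bound. A direct computation shows that the $N$- and $s$-dependences cancel exactly, producing $\|T(f_\kappa)T(f_{\kappa'})\|_{L^{p/2}}\lesssim r^{d-2(d+2)/p}\|f_\kappa\|_{L^2(\H^d)}\|f_{\kappa'}\|_{L^2(\H^d)}$ in the elliptic regime, and the analog with $r^{(d-1)-2(d+1)/p}$ in the conic. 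I would then interpolate this $L^2$-based bilinear bound with the trivial H\"older inequality $\|T(f_\kappa)T(f_{\kappa'})\|_{L^{p/2}}\leq \|T(f_\kappa)\|_{L^p}\|T(f_{\kappa'})\|_{L^p}$ using a parameter $\gamma\in(0,1-\frac{2}{p})$, raise to the power $p/2$, and sum over separated pairs at scale $r$. The summation relies on the bounded multiplicity of the separation relation together with the embedding $\ell^1\hookrightarrow \ell^a$ with $a=p(1-\gamma)/2$; the restriction $\gamma<1-2/p$ is exactly the condition $a\geq 1$ that makes this step work. Using also the disjointness $\sum_\kappa \|f_\kappa\|_{L^2}^2\simeq\|f_N\|_{L^2}^2$, this yields the per-scale bound $\sum_{\kappa\sim\kappa'}\|T(f_\kappa)T(f_{\kappa'})\|_{L^{p/2}}^{p/2} \lesssim (r^d)^{(1-\gamma)(p/2-(d+2)/d)}\bigl(\sup_\kappa \|T(f_\kappa)\|_{L^p}^{p\gamma}\bigr)\|f_N\|_{L^2}^{p(1-\gamma)}$, with the analog $(r^{d-1})^{(1-\gamma)(p/2-(d+1)/(d-1))}$ in the conic regime.

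Summing over dyadic $r$ gives a convergent geometric series in each regime: elliptic because the exponent $(1-\gamma)(p/2-(d+2)/d)$ is positive, conic because $(1-\gamma)(p/2-(d+1)/(d-1))$ is negative. Passing from the sum over $r$ to the supremum that appears in \eqref{sharpenedextensionineq} is achieved by absorbing a small fraction of the dyadic decay (writing $(r^d)^{\beta}=(r^d)^{\beta-\delta}(r^d)^\delta$, bounding one factor by the $\sup_r$, and summing the other), at the cost of a minor readjustment of $\gamma$ within $(0,1-2/p)$.

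The main obstacle is the quasi-orthogonality step, which requires confirming that the $d$-generational separation of Section \ref{sec:separated} really does produce bilinear products with bounded-overlap Fourier supports, both within a single scale and between different scales --- this is the geometric content underlying the whole decomposition. The remainder of the argument is exponent bookkeeping through H\"older, interpolation, and dyadic summation, with the pleasant cancellation of the $s$-parameter after H\"older being the single most important algebraic step: it is what makes the final weight $(r^d)^{(1-\gamma)(p/2-(d+2)/d)}$ emerge in the clean, scaling-invariant form demanded by the statement.
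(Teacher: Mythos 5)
Your overall architecture (Whitney decomposition, quasi-orthogonality, bilinear estimate, H\"older interpolation with parameter $\gamma$, summation over regions and scales) is the same as the paper's, and your exponent bookkeeping in the elliptic regime is correct: the $N$- and $s$-powers do cancel as you claim, and $(r^{d-\frac{2(d+2)}{p}})^{\frac p2}=(r^d)^{\frac p2-\frac{d+2}{d}}$. However, the step you describe as "the single most important algebraic step" --- applying H\"older $\|f_\kappa\|_{L^s}\leq|\kappa|^{\frac1s-\frac12}\|f_\kappa\|_{L^2}$ on each region so that $s$ disappears --- is precisely where your argument breaks at the endpoint exponents, which the theorem includes (and the lower endpoint $p=\frac{2(d+2)}{d}$ is actually invoked in Section \ref{sec:CC}). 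After converting to $L^2$, your only source of summability over the dyadic scales $r$ is the geometric factor $(r^d)^{(\frac p2-\frac{d+2}{d})(1-\gamma)}$; at $p=\frac{2(d+2)}{d}$ this exponent is zero, the sum over the infinitely many scales $0<r\leq 1$ diverges, and there is no "small fraction of the dyadic decay" left to absorb. (Symmetrically, at $p=\frac{2(d+1)}{d-1}$ the conic sum incurs a $\log N$ loss.) The paper avoids this by \emph{not} cancelling $s$: it keeps the quantity $\big(|\kappa|^{1-\frac2s}\|f_\kappa\|_{L^s}^2\big)^{\frac p2(1-\gamma)}$ and controls its double sum over all $r$ and $\kappa$ by Lemma \ref{BegoutVargasLemma}, a B\'egout--Vargas-type bound whose low/high frequency splitting $f_N=f_N^{\leq}+f_N^{>}$ produces geometric series in the region volume $V_{N,r}$ with exponents $\alpha-1>0$ and $\frac s2-1<0$; it is the strict inequality $s<2$, together with $\gamma<1-\frac2p$, that furnishes summability over scales independently of the sign of the $r$-weights. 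Your method does prove the statement for $\frac{2(d+2)}{d}<p<\frac{2(d+1)}{d-1}$ (modulo the cosmetic mismatch that absorbing decay changes the exponent of the $r$-weight without changing the exponents $p\gamma$ and $p(1-\gamma)$ on the other two factors), but not the theorem as stated.

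Two smaller remarks on the quasi-orthogonality step, which you correctly identify as the main obstacle but leave unproved. First, you expand $|T(f_N)|^2=\sum T(f_\kappa)\overline{T(f_{\kappa'})}$, whose summands have Fourier support in the difference sets $\widetilde\kappa-\widetilde\kappa'$; the paper instead expands $T(f_N)^2$, because the sumsets $\widetilde\kappa+\widetilde\kappa'$ sit at height $\simeq r^2/N$ above the hyperboloid $\H^d_2$ (estimate \eqref{eq:vertical}), and it is this vertical displacement that identifies the scale $r$ and yields the cross-scale bounded overlap; the difference-set geometry would have to be reworked. Second, the passage from bounded-overlap Fourier supports to $\big\|\sum F_j\big\|_{L^{p/2}}^{p/2}\lesssim\sum\|F_j\|_{L^{p/2}}^{p/2}$ for $\frac p2\leq 2$ is not the embedding $\ell^{p/2}\hookrightarrow\ell^2$ but an application of \cite[Lemma 2.2]{Ra12}, which requires covering each sumset by finitely many parallelepipeds whose slight dilates remain pairwise disjoint --- this occupies Steps 2--4 of Lemma \ref{lem:almostorthogonal} and is not automatic.
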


\noindent {\it Remark.} 
Both exponents in $r$ appearing on the right-hand side of inequality \eqref{sharpenedextensionineq} are favorable:
$\frac p2-\frac{d+2}{d}\geq 0$ (in case $r\leq1$) and
$\frac p2-\frac{d+1}{d-1}\leq 0$ (in case $r>1$), with strict inequality except for the case of endpoint exponents.\\

We start with two technical lemmata which bound certain quantities that will naturally appear in the course of the proof of Theorem \ref{Sharpened}. 

\begin{lemma}\label{lem:almostorthogonal}
Let $d\geq 3$ and $\frac{2(d+2)}{d} \leq p\leq  \frac{2(d+1)}{d-1}$.
Then the following inequality holds
\begin{equation}\label{pstarineq}
\Big\|\sum_{0<r\leq N}\sum_{\kappa\sim\kappa'\in\mathcal{D}_{N,r}} 
T(f_\kappa) T(f_{\kappa'})\Big\|_{L^{\frac p2}(\R^{d+1})}^{\frac p2}
\lesssim
\sum_{0<r\leq N}\sum_{\kappa\sim\kappa'\in\mathcal{D}_{N,r}} \big\|T (f_\kappa) T (f_{\kappa'})\big\|_{L^{\frac p2}(\R^{d+1})}^{\frac p2},
\end{equation}
for every dyadic number $N\geq 1$ and  admissible function $f\in L^2(\H^d)$.
\end{lemma}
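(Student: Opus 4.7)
The plan is to establish inequality \eqref{pstarineq} as a quasi-orthogonality statement in the spacetime Fourier domain. The key observation is that $T(f_\kappa)$ has spacetime Fourier transform supported on the lift $\widetilde{\kappa}\subset\H^d$ defined in \eqref{lift_def}, so the product $F_{r,\kappa,\kappa'}:=T(f_\kappa)T(f_{\kappa'})$ has spacetime Fourier support contained in the Minkowski sum $\widetilde\kappa+\widetilde{\kappa'}\subset\R^{d+1}$.

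The crux of the argument is a \emph{bounded-overlap property}: as the triple $(r,\kappa,\kappa')$ ranges over all dyadic scales $0<r\leq N$ and all separated pairs $\kappa\sim\kappa'\in\mathcal{D}_{N,r}$, each point of $\R^{d+1}$ belongs to at most $O(1)$ of the sets $\widetilde\kappa+\widetilde{\kappa'}$. The definition of ``separated'' in Section~\ref{sec:separated}---requiring non-adjacency up through the $(d-1)$-parent while allowing $d$-parent adjacency---is tailored precisely for this purpose: it ensures that not only the base pairs $\kappa\times\kappa'$ but also their Minkowski-sum images in $\R^{d+1}$ are well separated. I would verify this by inverting the map $(\xi,\xi')\mapsto(\xi+\xi',\jp{\xi}+\jp{\xi'})$; given a target point $(\eta,\mu)\in\R^{d+1}$, its preimage is a $(d-1)$-dimensional submanifold of $\H^d\times\H^d$, and the bilinear separation along this preimage pins down the scale $r$ up to a dyadic factor, after which the pair $(\kappa,\kappa')$ is located with bounded multiplicity.

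With bounded overlap in hand, the $L^{p/2}$ bound follows by complex interpolation. Observe that $p/2\leq (d+1)/(d-1)\leq 2$ whenever $d\geq 3$. The linear map $A\colon(G_j)\mapsto\sum_j G_j$, acting on tuples of functions whose $j$-th component has spacetime Fourier support in the prescribed set $\widetilde\kappa+\widetilde{\kappa'}$, satisfies $\|A\|_{\ell^1(L^1)\to L^1}\leq 1$ by the triangle inequality (no Fourier restriction needed) and $\|A\|_{\ell^2(L^2)\to L^2}\lesssim 1$ via Plancherel together with the bounded-overlap property. Stein's complex interpolation then yields $\|A\|_{\ell^{p/2}(L^{p/2})\to L^{p/2}}\lesssim 1$ on the Fourier-restricted subspace, and specialising to the tuple $(F_{r,\kappa,\kappa'})$ and raising the resulting inequality to the $(p/2)$-th power recovers \eqref{pstarineq}.

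The hardest step will be the bounded-overlap property. The challenge is to handle the elliptic ($r\leq 1$), conic ($r>1$), and transitional ($r\simeq 1$) regimes simultaneously---precisely why Section~\ref{sec:Caps} sets up the hybrid cap/sector framework. Verifying that the $d$-generation separation translates into genuine separation of the Minkowski sums requires tracking how the curvature of $\H^d$ interpolates between paraboloid-like and cone-like behavior as $r$ crosses unity, and ensuring that the overlap constant remains uniformly bounded across this transition.
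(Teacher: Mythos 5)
Your overall strategy---quasi-orthogonality from the Fourier supports $\widetilde\kappa+\widetilde\kappa'$, with the $d$-generation separation guaranteeing bounded overlap of these sumsets across all scales $r$---is the same as the paper's, and your Step 1 (bounded overlap, with the scale $r$ pinned down by the vertical displacement $\jp{\xi}+\jp{\xi'}-\jp{\xi+\xi'}_2\simeq r^2/N$) matches the paper's Step 1 almost exactly. The gap is in the final interpolation step. You cannot run Stein/Riesz--Th\"orin interpolation for the operator $A\colon (G_j)\mapsto\sum_j G_j$ ``on the Fourier-restricted subspace'': the endpoint bounds you propose hold only on the subspace of tuples with prescribed Fourier supports, and complex interpolation of a couple of subspaces is not in general the corresponding subspace of the interpolation space, so no interpolation theorem applies directly. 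Moreover, bounded overlap of Fourier supports alone is not a sufficient hypothesis for $\ell^q(L^q)\to L^q$ quasi-orthogonality when $q\neq 2$; this is precisely why the standard lemmas ([Ra12, Lemma~2.2], [KV13, Lemma~A.9], [TVV98, Lemma~6.1]) require more, namely multipliers $\varphi_j$ equal to $1$ on the $j$-th support, with $\supp(\varphi_j)$ contained in pairwise disjoint dilates and with $\|\widehat{\varphi_j}\|_{L^1}$ uniformly bounded. With such multipliers one writes $G_j=G_j\ast\widehat{\varphi_j}$ and extends $A$ to an operator defined on \emph{all} tuples, to which interpolation legitimately applies.

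Supplying those multipliers is where the real work lies, and it is absent from your proposal. The sets $\widetilde\kappa+\widetilde\kappa'$ are curved slabs over the hyperboloid $\H^d_2$ of thickness $\simeq r^2/N$; a characteristic-type multiplier adapted to such a curved region does not have uniformly bounded $L^1$ Fourier transform. The paper therefore spends Steps 2--4 covering each $\widetilde\kappa+\widetilde\kappa'$ by a universal number $K$ of parallelepipeds $P_\ell$ (affine images of the unit cube, obtained by lifting subrectangles of $\kappa+\kappa'$ to tangent planes of $\H^d_2$ and controlling the displacement between plane and hyperboloid), arranged so that the dilates $(1+\beta)\cdot P_\ell$ remain inside boundedly overlapping enlargements $\Sigma_{\kappa,\kappa'}$; only then do the required $\varphi$'s exist and [Ra12, Lemma~2.2] apply, together with the uniform $L^{p/2}$-boundedness of convolution with $\widehat{\one_{P_\ell}}$ via the Hilbert transform. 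To complete your argument you would need to carry out this covering-by-parallelepipeds construction (or an equivalent substitute); as written, the passage from bounded overlap to the $L^{p/2}$ inequality does not go through.
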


\begin{proof}

Let $\kappa\in\mathcal{D}_{N,r}$ be given, and let $\xi_0=c(\kappa)$ denote its center as in \eqref{eq:DefCenter}.
For every $\xi\in\kappa$, one easily checks that
\begin{gather}
\big| |\xi|- |\xi_0| \big|\lesssim \min\{1,r\} N,\label{eq:Property1}\\
\big(|\xi||\xi_0|-\xi\cdot\xi_0\big)^{\frac12}\lesssim r.\label{eq:Property2}
\end{gather}
Indeed, inequality \eqref{eq:Property1} follows from the fact that the length along the radial direction 
of $r$-caps and $r$-sectors at scale $N$ is comparable to $rN$ and to $N$, respectively,
and  inequality \eqref{eq:Property2} amounts to the fact that the angle between the vectors $\xi$ and $\xi_0$ is $O(\frac rN)$.
Now, given $\kappa\sim\kappa'\in\mathcal{D}_{N,r}$, with corresponding centers $\xi_0=c(\kappa)$ and $\xi_0'=c(\kappa')$, the following estimate follows from the definition of the separation relation $\sim$:
\begin{equation}\label{eq:Property3}
\frac{\big||\xi_0|-|\xi_0'|\big|}{N^2}+\frac{\big(|\xi_0||\xi_0'|-\xi_0\cdot \xi_0'\big)^{\frac12}}{N}\simeq\frac rN.
\end{equation}
Let $\widetilde{\kappa}$ and $\widetilde{\kappa}'$ be the lifts of the regions $\kappa$ and $\kappa'$ into the hyperboloid $\mathbb{H}^d$ as defined in \eqref{lift_def}. 

\smallskip

We aim to use \cite[Lemma 2.2]{Ra12} (which is a slightly more general version of \cite[Lemma A.9]{KV13} and \cite[Lemma 6.1]{TVV98}) to obtain the quasi-orthogonality proposed in \eqref{pstarineq}. Our first task is to understand the geometry of the sumset
$$\widetilde{\kappa} + \widetilde{\kappa}' = \big\{ (\xi+\xi',\jp{\xi}+\jp{\xi'}): (\xi,\xi')\in\kappa\times\kappa'\big\}\subset\R^{d+1}.$$
Using \eqref{eq:Property1}, \eqref{eq:Property2} and \eqref{eq:Property3}, one may reason as in \cite[Proof of Prop. 15]{COSS17} to further check that\footnote{Here we use the notation $\jp{x}_s:=(s^2+|x|^2)^{\frac12}$. Estimates \eqref{eq:vertical}--\eqref{eq:angular} also appear in \cite[Proof of Theorem 2.6]{Ca17}.}
\begin{gather}
\jp{\xi}+\jp{\xi'} - \jp{\xi+\xi'}_2 \simeq\tfrac{r^2}N,\label{eq:vertical}\\
\big||\xi+\xi'|-|\xi_0+\xi_0'|\big|\lesssim\min\{1,r\}N,\label{eq:radial}\\
\big(|\xi+\xi'| |\xi_0+\xi_0'| - (\xi+\xi')\cdot (\xi_0+\xi_0')\big)^{\frac12}\lesssim r.\label{eq:angular}
\end{gather}

\smallskip

\noindent {\it Step 1.} Observe that \eqref{eq:vertical}, \eqref{eq:radial} and \eqref{eq:angular} imply that the sumsets $\widetilde{\kappa} + \widetilde{\kappa}'$ are {\it almost disjoint}, in the following sense: There exists a universal constant such that, for any pair $(\kappa,\kappa')$ with $\kappa\sim\kappa'\in\mathcal{D}_{N,r}$, the number of pairs $(\rho,\rho')$ with $\rho\sim\rho'\in\mathcal{D}_{N,s}$ and 
\begin{equation}\label{non-empty-intersection}
\big(\widetilde{\kappa} + \widetilde{\kappa}') \cap \big(\widetilde{\rho} + \widetilde{\rho}') \neq \emptyset
\end{equation}
is bounded by this constant. In fact, if \eqref{non-empty-intersection} occurs, then estimate \eqref{eq:vertical} implies the existence of universal constants $a, b \in \Z$ such that $2^a r \leq s \leq 2^b r$. Let $\eta_0 = c(\rho)$ denote the center of $\rho$. 
Once $s$ is trapped, then \eqref{eq:Property1}, \eqref{eq:Property3} and \eqref{eq:radial}  imply that the lengths of $|\eta_0|$ and $|\xi_0|$ are not far from each other,  in the sense that 
\begin{equation}\label{modulus_not_so_far}
\big| |\eta_0|- |\xi_0| \big|\lesssim\min\{1,r\}N.
\end{equation}
In a similar way, \eqref{eq:Property2}, \eqref{eq:Property3} and \eqref{eq:angular} together imply that the angle between $\eta_0$ and $\xi_0$ is controlled, that is
\begin{equation}\label{angle_not_so_far}
\big(|\eta_0||\xi_0|-\eta_0\cdot \xi_0\big)^{\frac12} \lesssim r.
\end{equation}

Expressions \eqref{modulus_not_so_far} and \eqref{angle_not_so_far} imply that, given $\xi_0$, the number of possible choices for $\eta_0$ in the dyadic decomposition is finite and universally bounded. For each possible $\eta_0 = c(\rho)$, the number of regions $\rho'$ separated from $\rho$ is also finite and universally bounded.

\smallskip

\noindent {\it Step 2.} Observe that \begin{equation}\label{eq:suppFT}
\text{supp } \mathcal{F}_{t,x}[T(f_\kappa) T(f_{\kappa'})]\subset \widetilde{\kappa}+\widetilde{\kappa}',
\end{equation}
where $\mathcal{F}_{t,x}$ denotes the space-time Fourier transform. In order to use \cite[Lemma 2.2]{Ra12}, it is convenient to place the sumsets $\widetilde{\kappa}+\widetilde{\kappa}'$ inside regions which are geometrically simpler but still almost disjoint. Expression \eqref{eq:vertical} already implies that
\begin{equation}\label{eq:defGammakk}
\widetilde{\kappa}+\widetilde{\kappa}' \subset \big\{(\xi,\tau) \in \R^{d}\times\R : \jp{\xi}_2 + c_1 \tfrac{r^2}{N}\leq \tau \leq \jp{\xi}_2 + c_2 \tfrac{r^2}{N}  \ \ {\rm and} \ \ \xi \in \kappa+\kappa'\big\} =: \Gamma_{\kappa,\kappa'},
\end{equation}
for some universal constants $c_1, c_2$.  Note that equations \eqref{eq:radial} and \eqref{eq:angular} imply that the set $\kappa + \kappa'$ lies inside a rectangle centered at $\gamma_0:=\xi_0 + \xi'_0$, of height comparable to $\min\{1,r\}N$ (the major axis being aligned with the vector $\gamma_0$) and of  sidelength comparable to $r$. Denote this rectangle by $R_{\kappa,\kappa'}$.
Consider a centered dilation\footnote{More generally, given a parallelepiped $P$ and  $\lambda>0$, we denote by $\lambda\cdot P$ the {\it centered dilate} of $P$.
In other words, if $c_P$ denotes the center of $P$, then $\lambda\cdot P:=\lambda(P-c_P)+c_P$.} 
$R_{\kappa,\kappa'}^*:=(1+\alpha)\cdot R_{\kappa,\kappa'}$  
of $R_{\kappa,\kappa'}$, with $\alpha>0$ sufficiently small and independent of $(\kappa,\kappa')$,  such that the sets
\begin{equation*}
\Sigma_{\kappa,\kappa'}:=\big\{(\xi,\tau) \in \R^{d}\times\R : \jp{\xi}_2 + \tfrac{c_1}2 \tfrac{r^2}{N}\leq \tau \leq \jp{\xi}_2 + 2c_2 \tfrac{r^2}{N}  \ \ {\rm and} \ \ \xi \in R_{\kappa,\kappa'}^*\big\}
\end{equation*}
still have bounded overlap. We may now decompose the collection $\{(\kappa,\kappa'):\kappa\sim\kappa'\}$ as a union of a finite (universal) number of subsets whose corresponding $\{\Sigma_{\kappa,\kappa'}\}$ are pairwise disjoint.
By the triangle inequality, it suffices to bound the sum over just one of these subsets, which we henceforth denote by $\mathcal{T}$.

\smallskip
\noindent {\it Step 3.} We claim the existence of a universal number $K$ with the following property:  
For every $(\kappa,\kappa')\in\mathcal{T}$,
there exist  parallelepipeds $\{P_\ell=P_\ell(\kappa,\kappa')\}_{\ell=1}^K$ with disjoint interiors, satisfying
\begin{equation*}
    \Gamma_{\kappa,\kappa'}\subset \bigcup_{\ell=1}^K P_\ell,
\end{equation*}
and such that $(1+\beta)\cdot P_\ell\subset \Sigma_{\kappa,\kappa'}$, for some universal $\beta>0$.

\smallskip
Indeed, given a point $\gamma\in \R^d$, define $T(\gamma)$ to be the tangent plane to the hyperboloid $\H^d_2$ at the point $(\gamma, \langle \gamma \rangle_2)$, i.e.
$$T(\gamma):=\{(\gamma,\langle\gamma\rangle_2)+v:\, v \in \R^{d+1},\, v\bot (\gamma/\langle\gamma\rangle_2, -1)\}.$$ 
Let $e_1, e_2, \ldots, e_{d+1}$ denote the canonical basis vectors in $\R^{d+1}$. Without loss of generality, assume  $\gamma_0$ to be parallel to $e_d$. At a vector $te_d$, the slope of the tangent to the hyperbola $\{(te_d, \jp{t}_2): t\in\R\}$ equals $t/\jp{t}_2$. We may then consider a point $\gamma = te_d$ sufficiently close to $\gamma_0$, and  the corresponding hyperplane 
$$T(\gamma)=\{(\gamma, \jp{\gamma}_2) + (x_1, x_2, \ldots, x_{d-1}, x_d, x_d \,t/\jp{t}_2) \  \textrm{with each} \ x_i \in \R \}.$$ Lifting the rectangle $R_{\kappa,\kappa'}$ to the hyperplane $T(\gamma)$  amounts to choosing $|(x_1, x_2, \ldots ,x_{d-1})| \lesssim r$ and $x_d \simeq \min\{1,r\}N$. Set $y=(x_1, x_2, \ldots ,x_{d-1})$, and assume  
\begin{equation}\label{Cond_1_Rectangles}
|y| \leq c_3 r 
\text{ and  } 
|x_d| \leq c_4 \min\{1,r\}N,
\end{equation} 
for some constants $c_3, c_4$ which are yet to be  chosen. Under these assumptions, we may estimate the largest displacement in the vertical direction $e_{d+1}$ between the hyperplane $T(\gamma)$ and the hyperboloid $\H_2^d$ as follows. 
Recalling that $t \simeq N$, this displacement is given by 
\begin{equation*}
\sqrt{4 + (t+x_d)^2 + |y|^2} - \left( \sqrt{4 + t^2} + \frac{t x_d}{\sqrt{4 + t^2}}\right) \simeq \frac{4x_d^2 + |y|^2(4 +t^2)}{N^3}. 
\end{equation*}
By choosing the constants $c_3, c_4$ sufficiently small (but universal), we can bound this displacement from above by $\delta\frac{c_1}{10}\frac{r^2}{N}$, where $\delta<1$ is chosen so $c_2+\frac{\delta c_1}{10}<\frac{3c_2}{2}$, i.e. $\delta < \frac{5c_2}{c_1}$, where $c_1, c_2$  are the universal constants appearing in the definition \eqref{eq:defGammakk} of $\Gamma_{\kappa,\kappa'}$. This implies the existence of a  constant $K$, such that the original rectangle $R_{\kappa,\kappa'}$ can be decomposed into a union of $K$ smaller rectangles $\{R_\ell=R_\ell({\kappa,\kappa'})\}_{\ell=1}^K$ of the same size having disjoint interiors and verifying conditions \eqref{Cond_1_Rectangles}. 
We again emphasize that, once $c_3$ and $c_4$ are chosen, the number $K$ is universal.

\smallskip
For each $\ell$, let $\alpha_\ell$  be the center of the rectangle $R_\ell$, and let $T(R_\ell)$ denote the lift of $R_\ell$ into the hyperplane $T(\alpha_\ell)$. Define the region $P_\ell = P_\ell(\kappa,\kappa')  \subset \R^{d+1}$ as the sumset 
$$P_\ell := T(R_\ell) + \{s e_{d+1} : c_1 \tfrac{r^2}{N} \leq s \leq (c_2 + \tfrac{\delta c_1}{10}) \tfrac{r^2}{N}\}.$$
 Note that each $P_\ell$ is a parallelepiped lying above the hyperboloid $\H^d_2$ of height comparable to $ r^2/N$.
 Moreover, distinct elements of the family $\{P_\ell\}_{\ell=1}^K$ have disjoint interiors. Further observe that 
\begin{equation}\label{eq:support_gamma}
    \widetilde{\kappa}+\widetilde{\kappa}' \subset \Gamma_{\kappa,\kappa'} \subset \bigcup_{\ell=1}^{K} P_\ell.
\end{equation}
It follows from the construction of $R_{\kappa,\kappa'}^*$ and $\{R_\ell\}$ that there exists $\beta>0$, such that $(1+\beta)\cdot R_\ell \subset R_{\kappa,\kappa'}^*$, for every $\ell\in\{1,2,\ldots,K\}$. From the aforementioned displacement considerations and the choice of $\delta$ (by possibly choosing a smaller  $\beta$, depending only on $c_1,c_2$), we may guarantee that the parallelepipeds $\{P_\ell\}$  further satisfy 
\begin{equation}\label{eq:dilation inclusion}
    (1+\beta)\cdot P_\ell\subset \Sigma_{\kappa,\kappa'}.
\end{equation}
This concludes the the verification of claim.
\smallskip

\noindent {\it Step 4.}  Define $\psi_\ell:=\mathbbm{1}_{P_\ell}$.
The estimate 
\begin{equation}\label{eq:parallelepid_boundedness}
\|f\ast \widehat{\psi}_\ell\|_{L^q(\R^{d+1})}    \leq C\|f\|_{L^q(\R^{d+1})},
\end{equation} 
which holds for any exponent $q>1$, follows from a simple application of the boundedness of the Hilbert transform, 
yielding a constant $C=C_{q,d}<\infty$ that does not depend 
 on $\ell$ nor on $(\kappa,\kappa')$. 
 By the support considerations from \eqref{eq:suppFT} and \eqref{eq:support_gamma}, we have that
$$T(f_\kappa)T(f_{\kappa'})=\sum_{\ell=1}^K(T(f_\kappa)T(f_{\kappa'}))\ast \widehat{\psi}_\ell.$$
\smallskip
By the triangle inequality, it suffices to establish the estimate
\begin{equation}\label{eq:almost_there}
\Big\|\sum_{({\kappa,\kappa'})\in\mathcal{T}} 
(T(f_\kappa)T(f_{\kappa'}))\ast \widehat{\psi}_\ell\Big\|_{L^{\frac p2}(\R^{d+1})}^{\frac p2}
\lesssim
\sum_{({\kappa,\kappa'})\in\mathcal{T}}  \big\|T (f_\kappa) T (f_{\kappa'})\big\|_{L^{\frac p2}(\R^{d+1})}^{\frac p2},
\end{equation}
for each $\ell\in\{1,2,\ldots,K\}$.
From  \cite[Lemma 2.2]{Ra12}, we have that
\begin{equation}\label{eq:almost_there2}
\Big\|\sum_{({\kappa,\kappa'})\in\mathcal{T}}
(T(f_\kappa)T(f_{\kappa'}))\ast \widehat{\psi}_\ell\Big\|_{L^{\frac p2}(\R^{d+1})}^{\frac p2}
\lesssim
\sum_{({\kappa,\kappa'})\in\mathcal{T}}  \big\|(T(f_\kappa)T(f_{\kappa'}))\ast \widehat{\psi}_\ell\big\|_{L^{\frac p2}(\R^{d+1})}^{\frac p2}.
\end{equation}
In fact, the Fourier transform of each function $(T(f_\kappa)T(f_{\kappa'}))\ast \widehat{\psi}_\ell$ is supported in $P_\ell$, and as we have seen there exists $\beta>0$ such that the elements in the family $\{(1+\beta)\cdot P_\ell\}_{(\kappa,\kappa')\in\mathcal{T}}$ are pairwise disjoint. Moreover, for each $(\kappa,\kappa')\in\mathcal{T}$, one easily constructs a function $\varphi=\varphi(\kappa,\kappa')$ satisfying 
\begin{align*}
\begin{split}
    \supp{(\varphi)}&\subset (1+\beta)\cdot P_\ell, \\
    \varphi(x)&\equiv 1,\, \mathrm{if} \,\,x\in P_\ell, \\
    \|\widehat{\varphi}\|_{L^1(\R^{d+1})}&\leq C,
\end{split}
\end{align*}
where the constant $C$ is uniform in $(\kappa,\kappa')$.
One just has to observe that each parallelepiped $P_\ell$ is an affine image of the unit cube.
Therefore \eqref{eq:almost_there2} follows from a direct application of \cite[Lemma 2.2]{Ra12}.
To finish, invoke \eqref{eq:parallelepid_boundedness} with $q=p/2>1$ in order to obtain \eqref{eq:almost_there} from \eqref{eq:almost_there2}.
The proof is now complete.
\end{proof}

\begin{lemma}\label{BegoutVargasLemma}
Let $d\geq 3$ and $\frac{2(d+2)}{d} \leq p\leq  \frac{2(d+1)}{d-1}$.
Let $1\leq s<2$ and $0<\gamma<1-\frac2p$.
Then the following inequality holds
\begin{equation}\label{BegoutVargas}
\sum_{0<r\leq N}\sum_{\kappa\in\mathcal{D}_{N,r}} \big(|\kappa|^{1-\frac2s} \|f_\kappa\|_{L^s(\R^d)}^2\big)^{\frac p2(1-\gamma)}\lesssim \|f_N\|_{L^2(\R^d)}^{p(1-\gamma)},
\end{equation}
for every dyadic number $N\geq 1$ and admissible function $f\in L^2(\H^d)$.
\end{lemma}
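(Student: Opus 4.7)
Set $B_\kappa := |\kappa|^{1/2 - 1/s}\,\|f_\kappa\|_{L^s(\R^d)}$, so that \eqref{BegoutVargas} is exactly the inequality $\sum_r \sum_\kappa B_\kappa^{2q} \lesssim \|f_N\|_{L^2}^{2q}$ with $q := \tfrac{p}{2}(1-\gamma)$; the hypothesis $0<\gamma<1-\tfrac{2}{p}$ is just the statement that $q>1$, which is the critical feature. After normalizing $\|f_N\|_{L^2} = 1$, H\"older's inequality on the cap gives the trivial $\ell^\infty$-bound $B_\kappa \le \|f_\kappa\|_{L^2} \le 1$ for every cap across every scale.

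My plan is to interpolate between this $\ell^\infty$ bound and a weak-$\ell^2$ distributional bound on the sublinear map $f \mapsto (B_\kappa)_\kappa$, implemented through the layer-cake identity
\[
 \sum_{r,\kappa} B_\kappa^{2q} \;=\; 2q\int_0^1 \lambda^{2q-1}\,\#\{(r,\kappa): B_\kappa > \lambda\}\,d\lambda.
\]
Since $q>1$ exactly guarantees $\int_0^1\lambda^{2q-3}\,d\lambda<\infty$ (with a logarithmic factor to spare), the statement will reduce to the weak-type distributional bound $\#\{(r,\kappa):B_\kappa>\lambda\}\lesssim \lambda^{-2}\log_+(\lambda^{-1})$ uniformly in $N$.

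To prove the weak-type bound, rewrite heaviness as $\int_\kappa|f|^s > \lambda^s|\kappa|^{1-s/2}$ and select from $\mathcal{H}(\lambda):=\{\kappa: B_\kappa>\lambda\}$ the family $\mathcal{M}_\lambda$ of \emph{innermost} heavy caps, namely those $\kappa \in \mathcal{H}(\lambda)$ admitting no heavy proper subcap in the cap-tree. Distinct elements of $\mathcal{M}_\lambda$ cannot be nested, so they are pairwise disjoint subsets of $\mathcal{A}_N$; each satisfies $\|f_\kappa\|_{L^2}^2 \ge B_\kappa^2 > \lambda^2$, and disjointness combined with $L^2$-additivity gives $|\mathcal{M}_\lambda| \le \lambda^{-2}$. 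Every heavy cap lies in the ancestor-chain above some $\kappa_0\in\mathcal{M}_\lambda$, so it remains to estimate the number of $\lambda$-heavy ancestors of each $\kappa_0$. Here the relation $\int_{\kappa_j}|f|^s \ge \lambda^s|\kappa_j|^{1-s/2} = 2^{dj(1-s/2)}\lambda^s|\kappa_0|^{1-s/2}$ forces a geometric growth of the $L^s$-mass, while the universal ceiling $\int_{\kappa_j}|f|^s \le |\kappa_j|^{1-s/2}\|f_{\kappa_j}\|_{L^2}^s \le |\kappa_j|^{1-s/2}$ limits how long the chain can persist; together these give $O(\log(\lambda^{-1}))$ heavy ancestors per element of $\mathcal{M}_\lambda$, producing $\#\mathcal{H}(\lambda) \lesssim \lambda^{-2}\log(\lambda^{-1})$ and closing the argument.

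The main obstacle will be the ancestor-counting step, specifically showing that the heavy chain above each $\kappa_0\in\mathcal{M}_\lambda$ has length independent of $N$. The naive per-scale Chebyshev argument at each dyadic scale $r$ would yield $|\{\kappa\in\mathcal{D}_{N,r}:B_\kappa>\lambda\}|\le\lambda^{-2}$ but costs an extraneous factor of $\#\{\text{scales}\}\sim\log N$ when summed; the negative exponent $s/2-1$ in $B_\kappa^s = |\kappa|^{s/2-1}\int_\kappa|f|^s$ is what allows this log-loss to be replaced by the benign $\log(\lambda^{-1})$, which in turn integrates finitely against $\lambda^{2q-3}$ precisely because $q>1$.
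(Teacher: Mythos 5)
Your reduction is sound up to a point: with $B_\kappa:=|\kappa|^{\frac12-\frac1s}\|f_\kappa\|_{L^s}$ and $q=\frac p2(1-\gamma)>1$, the layer-cake identity does reduce the lemma to the uniform weak-type bound $\#\{(r,\kappa):B_\kappa>\lambda\}\lesssim\lambda^{-2}\log(\lambda^{-1})$, the bound $|\mathcal M_\lambda|\le\lambda^{-2}$ for the minimal heavy regions is correct, and the weak-type bound itself is true. The gap is the ancestor-counting step, and it is not reparable as stated. Your forced lower bound $\int_{\kappa_j}|f|^s>\lambda^s|\kappa_j|^{1-\frac s2}$ and your ceiling $\int_{\kappa_j}|f|^s\le|\kappa_j|^{1-\frac s2}$ both scale as $|\kappa_j|^{1-\frac s2}$, so comparing them yields only $\lambda<1$ and places no restriction on $j$: the ``geometric growth'' of the lower bound is exactly matched by the geometric growth of the ceiling. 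Moreover the conclusion is false. Take a nested chain $\kappa_0\subset\kappa_1\subset\cdots\subset\kappa_J$ of regions and set $f=\sum_{j=1}^J\epsilon\,|\kappa_j|^{-1/2}\one_{\kappa_j\setminus\kappa_{j-1}}$ with $\epsilon=J^{-1/2}$, so that $\|f\|_{L^2}\le1$. Since $|\kappa_{j-1}|\le\frac12|\kappa_j|$, one has $\int_{\kappa_j}|f|^s\ge\frac12\epsilon^s|\kappa_j|^{1-\frac s2}$, hence $B_{\kappa_j}\gtrsim\epsilon$ and every $\kappa_j$ is heavy at a threshold $\lambda\simeq\epsilon$, i.e.\ $J\simeq\lambda^{-2}$. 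Each $\kappa_j$ with $j\ge2$ contains $\kappa_1$, hence contains and is an ancestor of whichever minimal heavy region $\kappa_*\subseteq\kappa_1$ exists; so that single element of $\mathcal M_\lambda$ carries at least $J-1\simeq\lambda^{-2}$ heavy ancestors, not $O(\log\lambda^{-1})$. Your argument then degenerates to the per-scale Chebyshev count with its $\log N$ loss, which is exactly what you set out to avoid. (Note also that this configuration saturates the lemma: $\sum_jB_{\kappa_j}^{2q}\simeq J^{1-q}$, finite only because $q>1$ --- so any correct proof must handle it.)

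The mechanism that actually kills the sum over the unboundedly many scales is a pointwise high/low splitting rather than a cap-by-cap selection, and this is what the paper does: write $f_N=f_N\one_{\{|f|\le|\kappa|^{-1/2}\}}+f_N\one_{\{|f|>|\kappa|^{-1/2}\}}$ with the threshold tied to the volume $V_{N,r}$ of the region. For the low part, H\"older with exponent $2\alpha$ ($\alpha=q$) and Fubini reduce matters to $\int|f(\xi)|^{2\alpha}\sum_{r}V_{N,r}^{\alpha-1}\,\d\xi$, where the sum runs only over scales with $V_{N,r}\le|f(\xi)|^{-2}$; this is a geometric series in $V_{N,r}$ dominated by its largest term $|f(\xi)|^{-2(\alpha-1)}$ precisely because $\alpha-1>0$. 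The high part is handled symmetrically with the exponent $\frac s2-1<0$ and the complementary constraint on $r$. In your counterexample, it is exactly this pointwise splitting (the value $\epsilon|\kappa_j|^{-1/2}$ of $f$ on the $j$-th shell crosses the threshold $|\kappa|^{-1/2}$ as $|\kappa|$ varies) that converts the length-$\lambda^{-2}$ chain into two convergent geometric series. If you wish to keep your layer-cake architecture, you would need to import this same splitting into the proof of the weak-type bound, at which point the detour through $\mathcal M_\lambda$ buys nothing over the paper's direct strong-type argument.
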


\begin{proof}
We may assume that $\|f_N\|_{L^2} = 1$.  The strategy, suggested by the proof of \cite[Theorem 1.3]{BV07}, amounts to decomposing the function $f_N$ into low and high frequencies, depending on the size of the region $\kappa$.
More precisely, write
$$f_N
=\one_{\{|{f}|\leq |\kappa|^{-\frac12}\}}{f_N}+\one_{\{|{f}|> |\kappa|^{-\frac12}\}}{f_N}
=:{f_N^{\leq}}+{f_N^{>}}.$$
Set $\alpha:=\frac p2(1-\gamma)$.
To estimate the low frequencies, use H\"older's inequality to bound  
$$ \|f_N^{\leq}\|_{L^s(\kappa)}
\leq 
|\kappa|^{\frac1s-\frac1{2\alpha}}\|f_N^{\leq}\|_{L^{2\alpha}(\kappa)},$$
which holds provided $2\alpha\geq s$, or equivalently $\gamma\leq1-\frac sp$. 
In this case,
\begin{equation} \label{E:sum fN leq}
\sum_{0<r\leq N}\sum_{\kappa\in\mathcal{D}_{N,r}} 
\big(|\kappa|^{1-\frac2s} \|f_N^{\leq}\|_{L^s(\kappa)}^2\big)^{\alpha}\lesssim 
\sum_{0<r\leq N}\sum_{\kappa\in\mathcal{D}_{N,r}} 
|\kappa|^{\alpha-1}\|f_N^{\leq}\|^{2\alpha}_{L^{2\alpha}(\kappa)}.
\end{equation}
Let $V_{N,r}$ denote the volume of a region $\kappa \in \mathcal D_{N,r}$.  Recall that $V_{N,r} \simeq Nr^{d}$ when $0 < r \leq 1$, and that $V_{N,r} \simeq Nr^{d-1}$ when $1 < r < N$.
The right-hand side of \eqref{E:sum fN leq} can be estimated as follows:
\begin{equation}\label{geom_series}
\sum_{0<r\leq N}\sum_{\kappa\in\mathcal{D}_{N,r}} 
|\kappa|^{\alpha-1}\|f_N^{\leq}\|^{2\alpha}_{L^{2\alpha}(\kappa)}
\lesssim
\int_{|\xi|\simeq N}
\Bigg(\sum_{\substack{0<r\leq N: \\ |f(\xi)|\leq (V_{N,r})^{-\frac12}}}
(V_{N,r})^{\alpha-1}\Bigg)
 |f(\xi)|^{2\alpha} \d\xi.
\end{equation}
 Thus the sum on the right-hand side of \eqref{geom_series} amounts to  two geometric series, both of which can be estimated by their largest terms:
$$\int_{|\xi|\simeq N}
\Bigg(\sum_{\substack{0<r\leq N: \\ |f(\xi)|\leq (V_{N,r})^{-\frac12}}}
(V_{N,r})^{\alpha-1}\Bigg)
 |f(\xi)|^{2\alpha} \d\xi
 \lesssim
 \int_{|\xi|\simeq N} |f(\xi)|^{-2(\alpha-1)}|f(\xi)|^{2\alpha}\d\xi=\|f_N\|_{L^2}^2=1.$$
Note that the latter inequality only holds provided $\alpha-1>0$, or equivalently $\gamma<1-\frac2p$, which is a valid constraint since $p>2$.

\smallskip

To estimate the high frequencies, use Minkowski's inequality to bound
$$\sum_{0<r\leq N}\sum_{\kappa\in\mathcal{D}_{N,r}} \left(|\kappa|^{1-\frac2s} \|f_N^>\|_{L^s(\kappa)}^2\right)^{\alpha}
\leq
\left(\sum_{0<r\leq N}\sum_{\kappa\in\mathcal{D}_{N,r}} |\kappa|^{\frac s2-1} \|f_N^>\|_{L^s(\kappa)}^s\right)^{\frac{2\alpha}s},
$$
which  as before holds provided $2\alpha\geq s$.
The right-hand side of this expression can be estimated as before:
\begin{align*}
\sum_{0<r\leq N}\sum_{\kappa\in\mathcal{D}_{N,r}} |\kappa|^{\frac s2-1} \|f_N^>\|_{L^s(\kappa)}^s
&\lesssim
\int_{|\xi|\simeq N}\Bigg(\sum_{\substack{0<r\leq N: \\ |f(\xi)|> (V_{N,r})^{-\frac12}}} (V_{N,r})^{\frac s2-1} \Bigg) |f(\xi)|^s \d\xi\\
&\lesssim\int_{|\xi|\simeq N} |{f}(\xi)|^{-2(\frac s2-1)} |{f}(\xi)|^s \d\xi= \|f_N\|_{L^2}^2= 1.
\end{align*}
This concludes the verification of \eqref{BegoutVargas}.
\end{proof}

We are now ready for the proof of the refined Strichartz inequality.
\begin{proof}[Proof of Theorem \ref{Sharpened}]
We recall the following simple geometric observation: 
Given dyadic numbers $N\geq 1$ and $0<r\leq N$,
and a region $\kappa\in\mathcal{D}_{N,r}$, the number of regions $\kappa' \in\mathcal{D}_{N,r}$ which are separated from $\kappa$ is universally bounded. 
In other words,
\begin{equation}\label{DoubletoSingle}
\#\{\kappa': \kappa\sim\kappa'\in\mathcal{D}_{N,r}\}\lesssim_d 1.
\end{equation}
Via a standard decomposition argument, see \cite{BV07, TVV98}, we have that
\begin{equation}\label{Step3First}
\|T(f_N)\|_{L^p}^p
=\Bigg\|\sum_{0<r\leq N}\sum_{\kappa\sim\kappa'\in\mathcal{D}_{N,r}}
T (f_\kappa) T (f_{\kappa'})\Bigg\|_{L^{\frac p2}}^{\frac p2}.
\end{equation}
To verify this, recall the definition \eqref{eq:DefAnnulus} of the restricted annulus $\mathcal{A}_N$, and consider the diagonal
$$\Gamma:=\{(\xi,\eta)\in\mathcal{A}_N\times\mathcal{A}_N: \xi=\eta\}.$$ 
Then the following Whitney-type decomposition is a consequence of the construction performed in Section \ref{sec:Caps}:
\begin{equation}\label{eq:Whitney}
(\mathcal{A}_N\times\mathcal{A}_N)\setminus\Gamma
=\bigcup_{0<r\leq N}\bigcup_{\kappa\sim\kappa'\in\mathcal{D}_{N,r}} \kappa\times\kappa'.
\end{equation}
Identity \eqref{Step3First} follows from this by writing $\|T (f_N)\|_{L^p}^p=\|T (f_N)^2\|_{L^{\frac p2}}^{\frac p2}$.
By Lemma \ref{lem:almostorthogonal}, we then have
\begin{equation}\label{eq:applyL5}
\|T (f_N)\|_{L^p}^p\lesssim
\sum_{0<r\leq N}\sum_{\kappa\sim\kappa'\in\mathcal{D}_{N,r}} \big\|T (f_\kappa) T(f_{\kappa'})\big\|_{L^{\frac p2}}^{\frac p2}.
\end{equation}
On the one hand, each of these summands can be bounded by H\"older's inequality as follows:
\begin{equation}\label{Holder}
\|T (f_\kappa) T (f_{\kappa'})\|_{L^{\frac p2}}
\leq
\|T (f_\kappa)\|^\gamma_{L^p}\|T (f_{\kappa'})\|^\gamma_{L^{p}}
\|T (f_\kappa) T (f_{\kappa'})\|^{1-\gamma}_{L^{\frac p2}},
\end{equation}
where $\gamma \in (0,1)$ is a parameter to be chosen below.
On the other hand, we can split the sum on the right-hand side of \eqref{eq:applyL5} into two pieces, depending on whether $0<r\leq 1$ or $1<r\leq N$.
Let us focus on the first sum, that over caps.
We claim that
\begin{multline}\label{AllTogether}
\sum_{0<r\leq 1}\sum_{\kappa\sim\kappa'\in\mathcal{D}_{N,r}} \big\|T (f_\kappa) T (f_{\kappa'})\big\|_{L^{\frac p2}}^{\frac p2}
\lesssim
N^{-\frac p2(1-\gamma)} 
\left(\sup_{0<r\leq 1} \sup_{\kappa\in\mathcal{D}_{N,r}} 
(r^{d})^{(\frac p2-\frac{d+2}{d})(1-\gamma)} \|T (f_\kappa)\|_{L^p}^{p\gamma}\right)\\
\times\sum_{0<r\leq 1}\sum_{\kappa\in\mathcal{D}_{N,r}} \big(|\kappa|^{1-\frac2s} \|f_\kappa\|_{L^s}^2\big)^{\frac p2(1-\gamma)}.
\end{multline}
\noindent This follows from  inequality \eqref{Holder}, 
the case $f=g$ of the bilinear extension estimate \eqref{Bilinear1},
and the observation \eqref{DoubletoSingle} that allows to bound the double sum $\sum_{\kappa\sim\kappa'}$ by a single sum $\sum_\kappa$. 
One just has to recall that the Lebesgue measure of an $r$-cap at scale $N$ is comparable to $Nr^{d}$.

Lemma \ref{BegoutVargasLemma} then implies that the last factor on the right-hand side of inequality \eqref{AllTogether} is  $O(\|f_N\|_{L^2}^{p(1-\gamma)})$, provided $\gamma<1-\frac2p$.
As a consequence, the following inequality for $r$-caps at scale $N$ holds:
\begin{multline}\label{sectors}
\sum_{0<r\leq 1}\sum_{\kappa\sim\kappa'\in\mathcal{D}_{N,r}} \big\|T (f_\kappa) T (f_{\kappa'})\big\|_{L^{\frac p2}}^{\frac p2}\lesssim
 \left(\sup_{0<r\leq 1} \sup_{\kappa\in\mathcal{D}_{N,r}} 
(r^d)^{(\frac p2-\frac{d+2}{d})(1-\gamma)} \|T (f_\kappa)\|_{L^p}^{p\gamma}\right)
\|f_N\|_{L^2(\H^d)}^{p(1-\gamma)}.
\end{multline}
In a similar way,  recalling that the Lebesgue measure of an $r$-sector at scale $N$ is comparable to $Nr^{d-1}$, 
and using \eqref{Bilinear2} instead of \eqref{Bilinear1},
one can show the corresponding inequality for $r$-sectors at scale $N$,
\begin{multline}\label{caps}
\sum_{1<r\leq N}\sum_{\kappa\sim\kappa'\in\mathcal{D}_{N,r}} \big\|T (f_\kappa) T (f_{\kappa'})\big\|_{L^{\frac p2}}^{\frac p2}
\lesssim
\left(\sup_{1<r\leq N} \sup_{\kappa\in\mathcal{D}_{N,r}} 
(r^{d-1})^{(\frac p2-\frac{d+1}{d-1})(1-\gamma)} \|T (f_\kappa)\|_{L^p}^{p\gamma}\right)
\|f_N\|_{L^2(\H^d)}^{p(1-\gamma)},
\end{multline}
under the same assumption  $\gamma<1-\frac2p$.
Inequality \eqref{sharpenedextensionineq} follows from \eqref{eq:applyL5}, \eqref{sectors} and \eqref{caps}. 
The proof is now complete.
 \end{proof}

%%%%%%%%%%%%%%%%%%%%%%%%%%%%%%%%%%%%%%%%%%%%%%%%%%%%%%%%%%%%%%%%%%%%%%%%%%%%%%%%%%%%%%%%%%%%%%%%%%%%%%%%%%%%%%%%%%%%%%%%%%%%%%%%%%%%%%%%%%%%%%%%

\section{End of the proof: concentration-compactness}\label{sec:CC}

As we left off in Section \ref{sec:ang_rest}, let $\{f_n\}_{n\in\N} \subset L^2(\mathbb{H}^d)$ be an extremizing sequence for \eqref{ExtensionInequality}, with $\|f_n\|_{L^2(\mathbb{H}^d)} = 1$ for all $n \in \N$, and let $\{f_n^{(k_0)}\}_{n\in\N}$ be a quasi-extremizing sequence in the sense of \eqref{quasi-ext}. Assuming without loss of generality that $k_0 =1$, the sequence $\{f_n^{(1)}\}_{n\in\N}$ belongs to our class of admissible functions considered in Sections \ref{sec:Caps} and \ref{sec:RefinedStrichartz}.

\smallskip 

From Proposition \ref{annulardec}, for each $n\in\N$, there exists $N = N_n\in2^{\Z_{\geq0}}$ such that 
$$\big\|T\big((f_{n}^{(1)})_N)\big)\big\|_{L^p(\R^{d+1})}\geq \delta_3,$$
where $\delta_3>0$ is a universal constant.

\smallskip

If $\frac{2(d+2)}{d} \leq p <  \frac{2(d+1)}{d-1}$, then Theorem \ref{Sharpened} ensures for each $n\in\N$ the existence of a dyadic number $r=r_n$ satisfying $r \leq  2^\alpha$ for a universal constant $\alpha$, and of a region $\kappa=\kappa_n \in \mathcal{D}_{N,r}$, such that 
$$\big\|T\big((f_{n}^{(1)})_\kappa\big)\big\|_{L^p(\R^{d+1})}\geq \delta_4,$$
where $\delta_4>0$ is a universal constant. This implies at once that 
$$\|(f_{n}^{(1)})_\kappa\|_{L^2(\mathbb{H}^{d})}\geq \delta_5,$$
where $\delta_5>0$ is a universal constant. 
Set $L_n := L_{c(\kappa)}$, where $c(\kappa)$ denotes as usual the center of the region $\kappa$. 
Since $r \leq 2^\alpha$, a standard computation shows that the image $L_n^\flat(\kappa)$ is contained in a universal ball $B\subset\R^d$ centered at the origin. 
Therefore 
$$\|L_n^*f_{n}^{(1)}\|_{L^2(B)} \geq \delta_6,$$
where $\delta_6>0$ is a universal constant. As already observed in Section \ref{sec:ang_rest}, this plainly implies that 
$$\|L_n^*f_{n}\|_{L^2(B)} \geq \delta_6.$$
This establishes the existence of distinguished region when $\frac{2(d+2)}{d} \leq p <  \frac{2(d+1)}{d-1}$.

\smallskip

We can now invoke the machinery of \cite[Section 6]{COSS17}, which only works when $\frac{2(d+2)}{d} < p \leq  \frac{2(d+1)}{d-1}$, to arrive at the existence of extremizers stated in  Theorem \ref{Thm1} in the non-endpoint range $\frac{2(d+2)}{d} < p <  \frac{2(d+1)}{d-1}$.
We provide the details below.

\smallskip

By \cite[Proposition 18]{COSS17}, there exists $(x_n,t_n)\in\R^d\times\R$ such that the sequence $\{h_n\}_{n\in\N}$ defined by 
$$h_n(\xi):=e^{ix_n\cdot \xi}e^{i t_n\langle \xi\rangle}f_n(\xi)$$
admits a subsequence that converges weakly to a nonzero limit, say $h\neq 0$, in $L^2(\H^d)$.
For this subsequence, possibly after extracting a further subsequence, \cite[Proposition 19]{COSS17}  implies 
$$T(h_n)(x,t)\to T(h)(x,t), \text{ as }n\to\infty,$$
for almost every $(x,t)\in\R^d\times\R$.
The existence of extremizers then follows from a straightforward application of  \cite[Proposition 1.1]{FVV11}.
This completes the proof of Theorem \ref{Thm1}.

%%%%%%%%%%%%%%%%%%%%%%%%%%%%%%%%%%%%%%%%%%%%%%%%%%%%%%%%%%%%%%%%%%%%%%%%%%%%%%%%%%%%%%%%%%%%%%%%%%%%%%%%%%%%%%%%%%%%%%%%%%%%%%%%%%%%%%%%%%%%%%%%

\section*{Acknowledgements}
The authors are thankful to Rupert Frank {and Ren\'e Quilodr\'an} for helpful comments and suggestions.
E.C.~acknowledges support from CNPq - Brazil, FAPERJ - Brazil and the Simons Associate Scheme from the International Centre for Theoretical Physics (ICTP) - Italy. 
D.O.S.~was partially supported by the Hausdorff Center for Mathematics, the Deutsche Forschungsgemeinschaft through the Collaborative Research Center 1060, and the College Early Career Travel Fund of the University of Birmingham. 
M.S. ~acknowledges support from the grant PICT 2014-1480 (ANPCyT).
B.S.~was supported by a National Science Foundation grant (DMS-1600458).

\end{document}